\documentclass[12pt]{article}

\usepackage[utf8]{inputenc}
\usepackage[T1]{fontenc}

\usepackage{caption}
\usepackage{subcaption}

\usepackage[margin=2.5cm]{geometry}
\usepackage{amsfonts}
\usepackage{amsmath}
\usepackage{amssymb}
\usepackage{amsthm}
\usepackage{enumitem}
\usepackage{xcolor}
\usepackage[hyperfootnotes=false]{hyperref}
\usepackage{graphicx}
\usepackage{booktabs}
\usepackage{lipsum}

\usepackage{stmaryrd}



\usepackage{lipsum}

\newcommand{\cadlag}{c\`adl\`ag}
\newcommand{\ta}{T}
\newcommand{\PAR}[1]{\left(#1\right)}
\newcommand{\esp}[1]{\mathbb{E}\left[#1\right]}

\DeclareMathOperator{\Vect}{Vect}

\newcommand{\E}{\mathbb{E}}

\newcommand{\N}{\mathbb{N}}
\newcommand{\R}{\mathbb{R}}

\renewcommand{\P}{\mathbb{P}}
\renewcommand{\L}{\mathbb{L}}
\newcommand{\D}{\mathbb{D}}

\newcommand{\calC}{\mathcal{C}}
\newcommand{\calL}{\mathcal{L}}

\newcommand{\calD}{\mathcal{D}}
\newcommand{\calF}{\mathcal{F}}
\newcommand{\calB}{\mathcal{B}}

\newcommand{\calT}{\mathcal{T}}

\newcommand{\calA}{\mathcal{A}}
\newcommand{\calS}{\mathcal{S}}

\newcommand{\eps}{\varepsilon}
\newcommand{\ph}{\varphi}
\renewcommand{\phi}{\varphi}


\newcommand{\Id}{\mathrm{Id}}

\newcommand{\e}{{\rm e}}
\renewcommand{\d}{ {\rm d}}

\newcommand{\eqdef}{ \dps \mathop{=}^{{\rm def}} }
\newcommand{\one}{ {\rm l} \hspace{-.7 mm} {\rm l}}
\newcommand{\dps}{\displaystyle}

\newcommand{\abs}[1]{\left | #1\right |}
\newcommand{\set}[1]{\left\{#1\right\}}
\newcommand{\p}[1]{ \left(#1\right) }
\renewcommand{\b}[1]{\left [ #1\right ]}

\newcommand{\norm}[1]{\left\Vert#1\right\Vert}
\newcommand{\bracket}[1]{\left \langle #1\right \rangle}

\newcommand{\cco}{\llbracket}
\newcommand{\ccf}{\rrbracket}
\newcommand{\na}{\nabla}
 
 

\theoremstyle{plain}
\newtheorem{The}{Theorem}[section]
\newtheorem{Lem}[The]{Lemma}

\newtheorem{Def}[The]{Definition}
\newtheorem{Ass}[The]{Assumption}

\numberwithin{equation}{section}

\theoremstyle{definition}
\newtheorem{Rem}[The]{Remark}

\begin{document}

\title{
Exact targeting of Gibbs distributions using velocity-jump processes.
}
\author{
P.~Monmarch{\'e}\footnote{LJLL $\&$ LCT -- Laboratoire Jacques-Louis Lions and Laboratoire de Chimie Th\'eorique, Sorbonne Universit\'e }, %
M.~Rousset\footnote{Inria $\&$ IRMAR -- Institut de Recherche en Math\'ematiques de Rennes, Univ Rennes} 
and
P.A.~Zitt\footnote{
LAMA, Univ Gustave Eiffel, Univ Paris Est Creteil, CNRS, F-77454 Marne-la-Vallée, France
}
}

\date{August 2020}
\maketitle

\begin{abstract}
This work introduces and studies a new family of velocity jump Markov processes directly amenable to exact simulation with the following two properties: i) trajectories converge in law, when a time-step parameter vanishes, towards a given Langevin or Hamiltonian dynamics; ii) the stationary distribution of the process is always exactly given by the product of a Gaussian (for velocities) by any target log-density. 
The simulation itself, in addition to the computability of the gradient 
of the log-density, depends on the knowledge of appropriate explicit upper bounds on lower order derivatives of this log-density.
The process does not exhibit any velocity reflections (jumps maximum size can be controlled) and is suitable for the 'factorization method'. We provide rigorous mathematical proofs of the  convergence towards Hamiltonian/Langevin dynamics when the time step vanishes, and
of the exponentially fast convergence towards the target distribution when a suitable noise on velocities is present. Numerical implementation is detailed and illustrated.
\end{abstract}

\tableofcontents

%

\section{Introduction}

A kinetic process is a Markov process $(X_t,V_t)_{t\geqslant 0}$, where $X_t \in \R^d$ and $V_t\in\R^d$ are respectively called the position and velocity of the process, such that $X_t = X_0 + \int_0^t V_s \d s$ for all $t\geqslant 0$. In addition to modelling a variety of phenomena, these processes can be used as time continuous Markov Chain Monte Carlo algorithms. In this case, given a target probability distribution $\nu$ on $\R^d$, the idea is to construct a kinetic process 
that is ergodic with respect to some probability measure $\pi$ on $\R^{2d}$ whose first marginal is the target distribution $\nu$. This program generalizes the usual construction of a $\nu$-ergodic process $(X_t)_{t\geqslant 0}$ on $\R^d$. When this ergodicity holds,  for observables $f$ that only depend on the position, the empirical estimation $t^{-1}\int_0^t f(X_s)\d s$ still converges in large times towards $\nu(f)$. This idea traces back to the Molecular Dynamics (MD) of Alder and Wainwright \cite{AlderWainwright}, based on the Hamiltonian dynamics, introduced shortly after the seminal Metropolis algorithm. Beyond physical applications and motivations --- Hamiltonian-based processes simulate the real physical dynamics, an algorithmic motivation is that kinetic processes have a ballistic, rather than diffusive, behaviour: their inertia reduces backtracking, which improves the exploration of the configuration space, by comparison with reversible processes such as Metropolis-Hastings random walk or usual elliptic diffusions.

Langevin diffusion and Hamiltonian Monte-Carlo (HMC) are classical kinetic processes used for sampling purposes. In the last decade, another class of velocity jump samplers has emerged, first obtained as scaling limits of rejection-free lifted Markov chains \cite{PetersdeWith,BierkensRoberts,MicloMonmarche}. In these new samplers, the velocity is piecewise constant and is updated at random times; in particular, the process belongs to the family of piecewise deterministic Markov process (PDMP). The law of these so-called jump (or collision, or event) times is chosen in such a way that the invariant distribution of the process is the target $\pi$. An appealing feature of these processes is that they can be implemented in continuous time, since only the value of the process at its jump time is needed, and no supplementary time discretization is required. In particular, the equilibrium of the process effectively implemented is the correct one, which is usually not the case for discretized diffusions. In HMC-like methods, a Metropolis step is added which corrects for the time discretization; however the introduced rejection requires a velocity reflection which destroys the ballistic dynamics and impairs the efficiency of the algorithm. Another interesting point is that, as detailed in Section~\ref{sec:GeneralSetting} (see also \cite{Monmarche2019Kinetic_walk,Monmarche-MD}), different parts of the log-density of $\nu$ may be treated at different time scales through a factorization of the target measure, thus reducing the overall computational complexity of the algorithm. This property is somewhat analogous 
to the deterministic multi-time-step integration methods \cite{Tuckerman,CarterGibson} but, again, without their statistical bias.

\bigskip

If the user is only interested in computing static quantities, that is, integrals of some observables with respect to $\nu$, then any $\nu$-ergodic process, or $\pi$-ergodic kinetic process with marginal~$\nu$, is theoretically usable, even if some may perform better than others for a finite computational budget. 
 The question is a bit different when the aim is to compute dynamical quantities
 (diffusion constants, escape rates, quasi-stationnary distributions...) for a given, particular kinetic process, typically the Hamiltonian or Langevin dynamics.
 Indeed, though they have the same equilibrium, different kinetic processes may have completely different dynamical properties. For instance,  bouncy-type samplers, HMC, or other Metropolized schemes based on Langevin diffusions \cite{Ottobre2016} all feature occasional reflections of the velocity; such discontinuities never happen in Hamiltonian or Langevin dynamics.

Errors in the computation of dynamical quantities naturally 
occur when the computation is done by discretizing in time the 
continuous time dynamics of interest: a Langevin process discretized with a Verlet-like scheme for example, does not have exactly the same dynamical properties as the reference continuous-time process. In these cases however, there is a parameter, namely the 
discretization time-step $\eps$, which may be tuned to obtain a trade-off between dynamical precision and cost:  smaller $\eps$ lead to a better precision on the dynamical properties, 
at the cost of longer computations ---- simulating a trajectory for a given fixed time $T$
typically requires $T/\eps$ computations of the gradient of the log-density of $\nu$. 
Such a precision/computation cost tradeoff does not currently exist for bouncy-type kinetic samplers.

\bigskip


The main contribution of the present work is the design of a new family of velocity jump processes with two interesting properties. 
Firstly, similarly to discretized Langevin or Hamiltonian schemes, the process does not suffer from regular velocity reflections and moreover converges when a time-step parameter $\eps$ vanishes towards a given Langevin of Hamiltonian dynamics. 
Secondly,  similarly to bouncy-type samplers, it is a kinetic MCMC sampler with exact target distribution and suitable for the factorization method. 

We provide a rigorous mathematical proof of two related properties. The
first one is the convergence in distribution of trajectories of the considered process towards Hamiltonian dynamics, when the 
time-step parameter $\eps$ vanishes. This result relies on classical characterization techniques based on martingale problems. 
The second property we establish is the  exponentially fast convergence of the process time marginal distributions towards the exact target distribution, in an $\L^2$ sense. This result relies on a hypocoercivity analysis based on a Lyapunov function in the form of a well-chosen modified $\L^2$-norm, in the spirit of~\cite{DMS}.

The improvement from Hamiltonian integrators and randomized variants is thus that the static properties are unbiased and, maybe more importantly in this context, the factorization method is still available. The price to pay is the loss of geometric properties as symplecticity.  The improvement from bouncy-type samplers is that the proposed method introduces a time-step parameter $\eps$ that enables to interpolate the former with Hamiltonian/Langevin dynamics.

Finally, we remark that several recent works \cite{Underdamped,DoucetRHMC}  in Bayesian statistics have argued that samplers based on Hamiltonian dynamics or Langevin diffusion have good convergence properties, from the fact the continuous-time limit process has dimension-free convergence rate for smooth and concave potentials, and then controlling the distance between this limit and the effective algorithm. In this context, our family of processes may provide a way to keep the dimension-free convergence rate while suppressing the bias (although the dimension should still intervene in the complexity of the algorithm). As said above, we provide explicit~$\L^2$ convergence rates in the spirit of \cite{DMS} and \cite{Andrieu} under general assumptions.

\bigskip

The article is organized as follows. The general framework of kinetic samplers and velocity jump processes is introduced in Section~\ref{Sec:General}. Section~\ref{Sec:Gaussian} contains the definition of the new family of processes and the proof of convergence toward the Hamiltonian dynamics (Theorem~\ref{th:conv}). Exponential convergence toward equilibrium with explicit rates is established in Section~\ref{Sec:hypoco} through Hypocoercivity arguments (Theorem~\ref{ThmHypoco}).  The effective simulation of the processes is discussed in Section~\ref{Sec:Simulation}, and numerical experiments are provided in  Section~\ref{Sec:numerique}. Finally,  the proof of a general result for the convergence of Markov processes, Theorem~\ref{th:conv_gen}, used in the proof of Theorem~\ref{th:conv}, is postponed to Section~\ref{Sec:Supplement}.

\section{Kinetic samplers}\label{Sec:General}

\subsection{General setting}\label{sec:GeneralSetting}

Let $\nu$ and $\gamma$ be two probability laws on $\R^d$, where $\nu$ admits a density with respect to the Lebesgue measure proportional to $\exp(-U)$, for some function $U\in\calC^1(\R^d)$ --- the log-density.
We are interested in kinetic processes for which the 
Gibbs distribution $\pi = \nu\otimes\gamma$, namely
\begin{equation}\label{eq:gibbs}
  \pi( \d x \d v) \propto \exp\PAR{- U(x)} \d x \gamma(\d v) \,,
\end{equation}
is invariant.

\begin{Rem}[Marginal in the velocities]
  There are several possible choices for $\gamma$. Usual ones are Gaussian distributions and
   the uniform measure on a sphere or
  on a discrete set of velocities. 
\end{Rem}

 Consider a Markov process on $\R^d \times \R^d$ with
--- formal --- generator $\cal L$, decomposed as 
\begin{equation}\label{eq:gen1}
\calL \phi  (x,v) = \calT \phi  (x,v)  + \calF \phi  (x,v)  + \calD \phi  (x,v) 
\end{equation}
for smooth, compactly supported test functions $\phi\in \mathcal C_c^\infty(\R^{2d})$, where:
\begin{itemize}
\item the \emph{transport} part $\calT \phi  (x,v)  = v\cdot \nabla_x \phi(x,v)$ is the free-flight transport operator, and is the only part that acts on the position variable in the sense that $\calD \phi = \calF \phi =0$ if $\phi(x,v)=g(x)$ for some function $g$. In terms of trajectories, this ensures that $X_t = \int_0^t V_s ds$. 
\item the \emph{dissipative} part $\calD$ is a Markov generator that acts on the velocity variables and leaves $\gamma$ invariant: 
\begin{equation}\label{eq:genD}
\forall \phi\in\calC_c^\infty(\R^{2d})\,, \forall x \in \R^d\,,\qquad\int_{\R^d} \calD \phi  (x,v) \gamma(\d v) = 0\,.
\end{equation}
\item the \emph{force} part $\cal F$ acts on velocity variables
and is such that for all $\phi\in\calC_c^\infty(\R^{2d})$,
\begin{equation}\label{eq:genF}
\int_{\R^{2d}} \calF \phi  (x,v) \pi(\d x\d v) =  - \int_{\R^{2d}}  \phi  (x,v) \p{v\cdot \nabla U(x)} \pi(\d x\d v)  \,.
\end{equation}
\end{itemize}
Integrating by parts, we see that this last condition means that $\int \calF \phi \pi = - \int \calT \phi\pi$ for all $\phi\in\calC_c^\infty(\R^{2d})$. As a consequence  \eqref{eq:genD} together with \eqref{eq:genF} imply that $ \pi(\calL \phi ) = 0$ for all $\phi\in\calC_c^\infty(\R^{2d})$. If $\calC_c^\infty(\R^{2d})$ is a core for $\calL$, which is usually true and can be proven through regularization and truncation arguments \cite{DurmusGuillinMonmarche2018_PDMP}, then this implies that $\pi$ is invariant for $\calL$.

Many operators satisfy the requirements for the dissipative part $\calD$; 
let us mention three usual choices:
\begin{itemize}
\item  Friction/Dissipation:
\begin{equation}\label{eq:D1}
\calD \phi  (x,v) = - v\cdot \nabla_v \phi(x,v) + \frac{\sigma^2}{2}\Delta_v \phi(x,v)\,,
\end{equation}
for some $\sigma>0$. In this case $\gamma$ is the centered normal distribution with variance $\sigma^2$, and $\calD$ is the generator 
of an Ornstein-Uhlenbeck process acting on velocities. 
\item Velocity refreshment:
\begin{equation}\label{eq:D2}
\calD \phi  (x,v) = \int_{\R^d} \p{\phi(x,w)-\phi(x,v)} \gamma(\d w)\,.
\end{equation}
In terms of trajectories this corresponds to resampling the velocity 
at rate $1$, according to the equilibrium measure $\gamma$. 
\item Partial refreshment:
\begin{equation}\label{eq:D3}
\calD \phi  (x,v) = \int_{\R^d} \p{\phi(x,p v+\sqrt{1-p^2}w)-\phi(x,v)} \gamma(\d w)\,,
\end{equation}
for some $p\in[0,1)$ if $\gamma$ is a normal distribution. This 
corresponds to changing the velocity at random times, using the transition
kernel of the Ornstein-Uhlenbeck process, and can be seen (up to a rescaling in time) as an  interpolation between the previous two exemples.
\end{itemize}
In general, note that \eqref{eq:genD} implies that for any probability law $\tilde \nu$ on $\R^d$, $\tilde\nu\otimes\gamma$ is invariant for~$\calD$. Moreover, if \eqref{eq:genD} holds, then it also holds for the generator $\calD_2 \phi(x,v) = \eta(x) \calD\phi(x,v)$ for any positive function $\eta$ on $\R^d$. For instance, when $\calD$ models the interaction of the system with an external heat bath, there may be no coupling with the heat bath in the interior of some domain, i.e. $\eta(x) = 0$ for $x$ in the domain, and $\eta(x)>0$  outside. Similarly, if~$\calD_1$ and~$\calD_2$ both satisfy \eqref{eq:genD}, then $\calD_1+\calD_2$ does too.

\bigskip

Let us now discuss in more detail the \emph{force} part~$\calF$. 
The most classical choice here is the deterministic drift operator 
\[
  \calF\phi(x,v) = -\sigma^2\nabla U(x)\cdot\nabla_v \phi(x,v)
\]
 which satisfies~\eqref{eq:genF} if $\gamma$ is the centered normal distribution with variance $\sigma^2$. With this choice, then $\calL$ is the generator of the Hamiltonian dynamics if $\calD=0$, of the Langevin diffusion if $\calD$ is given by \eqref{eq:D1}, or of the HMC if $\calD$ is given by \eqref{eq:D2}. 
 
 The factorization (or splitting) method relies on the following remark. Suppose that $\nabla U(x) = \sum_{i=1}^N \xi_i(x)$ for some vector fields $\xi_i$ on $\R^d$, $i=1..N$, and that we have $N$ operators $\calF_1,\dots,\calF_N$ such that for all $i\in\cco 1,N\ccf$,
 \begin{equation}
 \label{eq:genFi}
\int_{\R^{2d}} \calF_i \phi  (x,v) \pi(\d x\d v) =  - \int_{\R^{2d}}  \phi  (x,v) \p{v\cdot \xi_i(x)} \pi(\d x\d v)  \,.
\end{equation}
Then $\calF = \sum_{i=1}^N \calF_i$ satisfies \eqref{eq:genF}. If $\xi_i = \nabla U_i$ for all $i\in\cco 1,N\ccf$ for some $U_i\in\mathcal C^1(\R^d)$, then the decomposition of $\calF$ is based on the factorization
\[\nu(\d x) \propto \prod_{i=1}^N e^{-U_i(x)} \d x\,.\]
Note that in that case it is not necessary that $\exp(-U_i)$ has finite mass. More generally  $\xi_i$ is not required to be a gradient. For instance, if $(e_i)_{i\in\cco 1,d\ccf}$ is the canonical basis of $\R^d$, then $\xi_i(x) = (\nabla U(x) \cdot e_i) e_i$ gives a decomposition of the forces $\na U$ as a sum of possibly non-gradient forces.

Through such a decomposition, different forces may be treated with different dynamics. For instance, as we will see in Section \ref{Sec:Simulation}, jump mechanisms  are easily simulated if $\na U$ is bounded, or Lipschitz, with a known bound, which is not always the case. On the other hand, drift mechanisms suffer the problem of discretization, and a possibly higher computational cost since the forces have to be computed at each time-step. If $\na U$ can be decomposed in long-range forces which are expansive to compute but easily bounded, and short-range forces which are possibly singular but cheap to compute, then it is natural to treat the first ones with jump processes and the second ones with drift processes \cite{Monmarche-MD}. Similarly, if different forces have different time-scales, then instead of using different time-steps in a numerical integration of a drift mechanism, it is possible to use different jump mechanisms as detailed in Section  \ref{Sec-Multitimestep}.

In the rest of the paper, unless otherwise specified, we will only consider the non-factorized condition \eqref{eq:genF}. Indeed, from an operator $\mathcal F$ that satisfies \eqref{eq:genF} (or more precisely \eqref{eq:F*} below) and whose definition only involves $U$ through $\nabla U$, it is then easy to obtain an operator $\mathcal F_i$ that satisfies \eqref{eq:genFi} by replacing $\nabla U$ by $\xi_i$ everywhere in the definition of $\mathcal F$ (see Section~\ref{Sec-limit-factorize}).

\subsection{Velocity jumps}
Let $\lambda(x,v)$ be a non-negative function, and 
for each $x$, let $k(x,v;\d v')$ be a Markov kernel. We 
denote by $q$ the non-normalized kernel $q(x,v;\d v') = \lambda(x,v) k(x,v; \d v')$. 
From now on, we consider the case where the jumps on the velocity
are given by such a kernel: 
\begin{equation}\label{eq:gen}
\calF \phi  (x,v) 
  = \int_{v' \in \R^d} \p{ \phi(x,v') - \phi(x,v)}  q(x,v; \d v'). 
\end{equation}
In this case, the dynamics of a Markov process with generator $\calT+\calF$
is the following: the $x$ variable evolves deterministically at velocity $v$; the velocity is piecewise constant, and jumps at a rate 
$\lambda(x,v)$ to a new velocity $v'$ sampled according
to $k(x,v;\d v')$. The number of jumps may go to infinity at finite time, unless for instance $\lambda$ is bounded. This kind of process is known as a \emph{velocity jump process}.

In a way that is similar to the classical Metropolis algorithm, the 
jump mechanism $q$ will be constructed by choosing a nice \emph{proposal kernel} $q_0$, and then \emph{modifying it} to take the log-density
$U$ into account, yielding a \emph{corrected kernel}~$q$. We start 
by stating two conditions that our proposal kernel should satisfy. 
\begin{Def}[Conditions for the proposal kernel]
A non-negative kernel $q_0(x,v ; \d v')$ is \emph{reversible}
with respect to $\gamma$ if
\begin{equation}\label{eq:rev}
 q_0(x,v ; \d v')\gamma(\d v) 
 = q_0(x,v' ; \d v) \gamma(\d v') \qquad \forall x \in \R^d. \tag{R}
\end{equation} 

It satisfies the  \emph{average condition} \eqref{eq:av}  if moreover $\int_{v'}1+\abs{v'} q_0(x,v ; \d v') < + \infty$ for all $(x,v) \in \R^{2d}$ and
 \begin{equation}\label{eq:av}
 \nabla  U(x) \cdot  \int_{v' \in \R^d} \frac{1}{2}(v - v') q_0(x,v ; \d v') 
 =    \nabla  U(x) \cdot v,
 \qquad \text{$\d x  \gamma( \d v)$-a.e.}. \tag{A}
\end{equation}
\end{Def}
Note that \eqref{eq:av} may be rewritten in terms of the
intensity $\lambda_0(x,v) = \int q_0(x,v,\d v')$ 
and the normalized kernel $k_0 = q_0/\lambda_0$ 
as
 \begin{equation}\label{eq:av-2}
  \PAR{\int v' k_0(x,v;\d v') }\cdot \nabla  U(x) = \PAR{1 - \frac{2}{\lambda_0(x,v)}} v\cdot \nabla  U(x).
\end{equation}

Let $\psi: \R \to \R_+$ be a measurable function such that 
\begin{equation}
\label{eq:conditionOnPsi}
\psi(s)-\psi(-s) = s, \quad \forall s\in \R.
\end{equation}
The basic choice for $\psi$ is $\psi(s)=(s)_+$, but as remarked in \cite{Andrieu2} there are other possibilites, like $\psi(s) =  a\ln(e^{s/a}+1)$ for $a>0$. For any proposal kernel $q_0$, let us define a corrected kernel by:
\begin{equation}
  \label{eq:corrected_kernel}
  q(x,v,\d v') = \psi\p{\frac{1}{2}\nabla  U(x)\cdot (v-v')} q_0(x,v;\d v'). 
\end{equation}

Our work is based on the following remark. 
\begin{Lem}\label{lem:invariance}
Assume that $q_0(x,v ; \d v')$ is reversible with
respect to $\gamma$, in the sense of condition~\eqref{eq:rev}. Let $q$ be the 
corrected non-normalized kernel defined by \eqref{eq:corrected_kernel}, 
where the function~$\psi$ satisfies~\eqref{eq:conditionOnPsi}. 
The corresponding operator $\calF$ given by~\eqref{eq:gen} satisfies the condition \eqref{eq:genF} if and only if the average condition~\eqref{eq:av} holds true; if this holds then 
the measure $\pi$ is invariant for the process. 
\end{Lem}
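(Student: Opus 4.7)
The plan is to compute $\int_{\R^{2d}} \calF\phi\, \d\pi$ directly from the definition of the corrected kernel, reduce it via reversibility \eqref{eq:rev} of $q_0$ and the defining identity $\psi(s)-\psi(-s)=s$ to a single integral against $\phi$, and then recognize \eqref{eq:av} as exactly what is needed to match the right-hand side of \eqref{eq:genF}. Once \eqref{eq:genF} is established, the invariance of $\pi$ under $\calL=\calT+\calF+\calD$ follows from the observation, already made in the paragraph after \eqref{eq:genF}, that integration by parts makes the $\calT$ contribution cancel, while $\calD$ preserves $\pi$ by \eqref{eq:genD}.

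Concretely, write $s=s(x,v,v'):=\tfrac12\nabla U(x)\cdot(v-v')$ so that $q(x,v;\d v')=\psi(s)\,q_0(x,v;\d v')$, and split
\begin{equation*}
\int \calF\phi\, \d\pi \;=\; \iint \phi(x,v')\,\psi(s)\, q_0(x,v;\d v')\,\pi(\d x\d v)\;-\;\iint \phi(x,v)\,\psi(s)\, q_0(x,v;\d v')\,\pi(\d x\d v).
\end{equation*}
In the first integral I exchange the dummy variables $v\leftrightarrow v'$ and apply reversibility in the form $q_0(x,v';\d v)\gamma(\d v')=q_0(x,v;\d v')\gamma(\d v)$, remembering that $\pi\propto e^{-U(x)}\d x\,\gamma(\d v)$ is symmetric in this exchange. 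This turns $\psi(s(x,v,v'))$ into $\psi(s(x,v',v))=\psi(-s(x,v,v'))$. Combining with the second integral and using $\psi(-s)-\psi(s)=-s$ gives
\begin{equation*}
\int \calF\phi\, \d\pi \;=\; -\int \phi(x,v)\left(\int s(x,v,v')\, q_0(x,v;\d v')\right)\pi(\d x\d v).
\end{equation*}

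Comparing with \eqref{eq:genF}, equality for every $\phi\in\mathcal C_c^\infty(\R^{2d})$ is equivalent, by a standard density argument, to
\[\int \tfrac12\nabla U(x)\cdot(v-v')\, q_0(x,v;\d v')\;=\;v\cdot\nabla U(x)\qquad \pi\text{-a.e.\ }(x,v),\]
and since $e^{-U}>0$ this is precisely condition \eqref{eq:av}. The final invariance statement then follows from the general argument recalled above.

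The main technical point is to justify Fubini and the change of variables in the double integrals; this is exactly where the integrability clause $\int (1+|v'|)\,q_0(x,v;\d v')<\infty$ bundled into \eqref{eq:av} plays its role, together with the compact support of $\phi$ and the local boundedness of $\nabla U$. Once these are in hand, the proof is a short algebraic manipulation using only reversibility and the defining property of $\psi$; no smoothness or positivity of $q_0$ is needed.
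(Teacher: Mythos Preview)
Your proof is correct and follows essentially the same route as the paper: split $\int\calF\phi\,\d\pi$ into its two terms, swap $v\leftrightarrow v'$ in the first via reversibility, and collapse the difference $\psi(-s)-\psi(s)$ to $-s$ to reduce the question to whether the inner integral equals $v\cdot\nabla U(x)$ almost everywhere. The only addition is your remark on the Fubini justification, which the paper leaves implicit.
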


\begin{proof}
Let $a(x,v,v') = \frac{1}{2}\p{\nabla  U\cdot v-v'}$. 
 For any $\phi$, 
\begin{align*}
  \int \calF \phi(x,v) \d \pi(\d x \d v)
  &= 
  \int \phi(x,v')  \psi (a(x,v,v')) q_0(x,v;\d v') \pi(\d x \d v) \\
  &\quad - \int  \phi(x,v) \psi(a(x,v,v')) q_0(x,v;\d v') \pi(\d x \d v)\,.
\end{align*}
In the first integral, use the reversibility assumption and 
interchange the variables $v$ and $v'$ to get:
\begin{align*}
  \int \calF \phi(x,v) \d \pi(\d x \d v)
  &= 
  \int \phi(x,v)  \psi(a(x,v',v)) q_0(x,v;\d v') \pi(\d x \d v) \\
  &\quad - \int  \phi(x,v) \psi (a(x,v,v')) q_0(x,v;\d v') \pi(\d x \d v) \\
  &= \int \left[
     \int \p{\psi(a(x,v',v)) - \psi(a(x,v,v'))} q_0(x,v;\d v') 
  \right] 
  \phi(x,v) \pi(\d x \d v) \\
  &= -\int \left[
     \int a(x,v,v') q_0(x,v;\d v') 
  \right] 
  \phi(x,v) \pi(\d x \d v).
\end{align*}
As a consequence, the condition \eqref{eq:genF} is met if and only if the term between brackets is almost everywhere equal to $v\cdot\na U(x)$, 
  which is exactly the averaging condition~\eqref{eq:av}.  
\end{proof}

\begin{Rem}
In the case where the corrected kernel is constructed with
the function $\psi(s)=(s)_+$, at each jump, the scalar product of the velocity with
 $-\nabla U$ increases almost surely.  In that sense, there is ``minimal
 noise'' in the tangential part $\nabla U$. The condition can be relaxed by
 setting:
 \begin{equation}\label{eq:q_bis}
 q(x,v ; \d v') = \b{\psi \p{\frac{1}{2}\nabla  U(x) \cdot (v-v')  }  + g(x)} q_0(x,v ; \d v')
\end{equation}
for any non-negative function $g$ on $\R^d$. One checks easily that the averaging condition~\eqref{eq:av} is unchanged. The
process then performs jumps more often, but they are less constrained to be
aligned with $- \nabla U$. In fact, if $q_0$ is reversible for $\gamma$, then the kernel  $\tilde q(x,v ; \d v') :=  g(x)  q_0(x,v ; \d v')$ leaves invariant $\tilde \nu\otimes\gamma$ for all law $\tilde \nu$ on $\R^d$, and thus $\tilde q$ can be incorporated in the dissipative part $\mathcal D$ of the generator. For this reason, in the rest of the paper we only consider the case $g=0$.
\end{Rem}

\begin{Rem}\label{Rem:F*}
In the proof of Lemma \ref{lem:invariance} the integration with respect to the variable $x$ plays no role, so that in fact if $q_0$ satisfies the conditions (R) and (A) then for all $\phi\in\calC_c^\infty(\R^{2d})$,
\begin{equation}\label{eq:F*}
\int_{\R^{d}} \calF \phi  (x,v) \gamma(\d v) =  - \int_{\R^{d}}  \phi  (x,v) \p{v\cdot \nabla U(x)} \gamma(\d v) \qquad \text{$\d x$-a.e.}.
\end{equation}
\end{Rem}

\subsection{Particular known cases}
We now show how special forms of $q_0$ lead to various known sampling algorithms. 

\begin{The}[Zig Zag process]
  Let $\gamma$ be the uniform measure on the finite set $\{-1,1\}^d$. 
  For $v\in\{-1,1\}^d$, let $q_0(x,v;\d v') = \sum_{w: w\sim v} \delta_{w}(\d v')$, 
  where $w\sim v$ means that $v$ and $w$ are neighbours on the discrete
  cube, that is, they differ by one coordinate. 

  Then $q_0$ is reversible with respect to $\gamma$ and satisfies the
  average condition (A); the corresponding process is the 
  \emph{zig-zag process}. 
\end{The}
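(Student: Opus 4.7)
The plan is to verify separately the reversibility condition~\eqref{eq:rev} and the averaging condition~\eqref{eq:av} for the proposed kernel, and then to identify the resulting corrected dynamics with the zig-zag process found in the literature.

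For reversibility, I would exploit the fact that $\gamma$ is uniform on $\{-1,1\}^d$, so $\gamma(\{v\})=2^{-d}$ for every $v$, and that the neighbour relation $\sim$ on the discrete cube is symmetric. Writing both sides of \eqref{eq:rev} as measures on $\{-1,1\}^d \times \{-1,1\}^d$, each term reduces to $2^{-d}\mathbf{1}_{v\sim v'}$, so (R) is immediate.

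The heart of the verification is the averaging condition. Here I would use that, for $v \in \{-1,1\}^d$, the set of neighbours of $v$ is exactly $\{w^{(i)} := v - 2v_i e_i : 1 \le i \le d\}$, where $(e_i)$ denotes the canonical basis. Then
\begin{equation*}
\tfrac{1}{2}\int (v-v')\, q_0(x,v;\d v') \;=\; \tfrac{1}{2}\sum_{i=1}^d (v - w^{(i)}) \;=\; \sum_{i=1}^d v_i e_i \;=\; v,
\end{equation*}
so taking the scalar product with $\nabla U(x)$ yields \eqref{eq:av} with equality (not merely almost everywhere). Applying Lemma~\ref{lem:invariance} then gives that the corrected kernel $q$ built from $q_0$ via~\eqref{eq:corrected_kernel} produces a generator $\calF$ satisfying~\eqref{eq:genF}, hence $\pi$ is invariant.

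Finally, to identify the constructed process with the zig-zag sampler, I would specialize the computation of the jump rates. For each coordinate $i$, the corresponding neighbour $w^{(i)}$ satisfies $\tfrac{1}{2}\nabla U(x)\cdot(v-w^{(i)}) = v_i \partial_i U(x)$, so if one takes $\psi(s)=s_+$ the $i$-th component of the velocity is flipped at rate $(v_i\partial_i U(x))_+$, which is precisely the defining jump mechanism of the zig-zag process. I do not anticipate any genuine obstacle here: the statement is essentially a worked example illustrating the general framework, and the only point requiring care is keeping the factor $2$ straight when computing $v-w^{(i)}$.
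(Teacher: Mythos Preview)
Your proposal is correct and follows essentially the same route as the paper: both verify reversibility from the symmetry of the neighbour relation and the uniformity of $\gamma$, compute $\tfrac{1}{2}(v-w^{(i)})=v_ie_i$ to check~\eqref{eq:av}, and then specialize~\eqref{eq:corrected_kernel} with $\psi(s)=(s)_+$ to recover the zig-zag flip rates $(v_i\partial_i U(x))_+$. The only cosmetic difference is that you spell out the reversibility check explicitly, whereas the paper dismisses it as ``clear''.
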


\begin{proof}
  The reversibility is clear. To check the average condition, 
  remark that if $w$ and $v$ differ only by the $i$\textsuperscript{th}
  coordinate, then $(v-w)/2 = v_i e_i$ where $e_i$ is the $i$\textsuperscript{th}
  basis vector. 
  Therefore
\[    \nabla  U(x)\cdot  \int_{v' \in \R^d} \frac{1}{2}(v - v') q_0(x,v ; \d v') 
 = 
 \sum_{i=1}^d  \nabla  U(x) \cdot v_i e_i   \\
=  \nabla  U(x)\cdot v. \]
With $\psi(s)=(s)_+$, the corrected kernel  is given by 
\[q(x,v,\d v') = \frac{1}{2}\p{ \nabla  U(x)\cdot (v-v')}_+ q_0(x,v;\d v') \\
  =  \sum_{i=1}^d \p{ \nabla  U(x)\cdot  v_i e_i}_+  \delta_{v - 2v_ie_i} \]
which is exactly the zig-zag jump kernel. 
\end{proof}

\begin{The}[Bouncy particle]
  Let $\gamma$ be the uniform measure on a sphere. For $v$ on the 
  sphere, let $q$ be the degenerate kernel 
  $q(x,v;\d v') = \delta_{R(x)v}(\d v')$ where $R(x)$ is the 
  symmetry with respect to the orthogonal of $\nabla U$, that is, 
  \[ R(x) v = v - 2 \one_{\{\nabla U \neq 0\}}   \frac{v\cdot \nabla U(x)}{|\nabla U(x) |^2} \nabla U(x).\]
  Then $q$ is reversible with respect to $\gamma$ and satisfies
  the average condition (A); the corresponding process 
  is the \emph{bouncy particle sampler}. 
\end{The}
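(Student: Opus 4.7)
My plan is to verify the two conditions \eqref{eq:rev} and \eqref{eq:av} for the proposed kernel $q$, then to apply Lemma~\ref{lem:invariance} and identify the corrected kernel as the standard bouncy particle jump mechanism. The whole proof is parallel to the zig-zag case.

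For reversibility, the geometric key fact is that, where $\nabla U(x)\neq 0$, the map $v\mapsto R(x)v$ is the orthogonal reflection across the hyperplane $(\nabla U(x))^{\perp}$; in particular it is an involution preserving the uniform measure on the sphere. Where $\nabla U(x) = 0$, the indicator built into the definition makes $R(x)$ the identity, which is trivially an involution and preserves $\gamma$. From this, \eqref{eq:rev} follows by a one-line change of variables $v \leftrightarrow R(x)v$ in the integral form of the reversibility condition, tested against arbitrary bounded measurable functions of $(v,v')$. For the average condition, there is nothing to check where $\nabla U(x) = 0$; at other points, the explicit formula for $R(x)$ gives $\tfrac{1}{2}(v - R(x)v) = \bigl((v\cdot\nabla U(x))/|\nabla U(x)|^2\bigr)\nabla U(x)$, and taking the dot product with $\nabla U(x)$ yields $v\cdot \nabla U(x)$, which is exactly \eqref{eq:av}.

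Once \eqref{eq:rev} and \eqref{eq:av} are established, invariance of $\pi$ follows from Lemma~\ref{lem:invariance}. To identify the process as the bouncy particle sampler, I would reuse the same computation to see that $\tfrac{1}{2}\nabla U(x)\cdot (v - R(x)v) = v\cdot \nabla U(x)$, so that, taking $\psi(s) = (s)_+$ in~\eqref{eq:corrected_kernel}, the corrected kernel becomes $(v\cdot \nabla U(x))_+\,\delta_{R(x)v}(\d v')$. This is exactly the bouncy particle dynamics: jumps occur at rate $(v\cdot\nabla U(x))_+$ and send $v$ deterministically to its reflection across $(\nabla U(x))^{\perp}$.

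I do not anticipate any genuine obstacle: the whole argument is a direct verification. The only point requiring care is the behaviour on the zero set of $\nabla U$, where $R(x)$ would otherwise be ill-defined; this is neatly handled by the indicator in the definition of $R(x)$, which simultaneously makes both sides of \eqref{eq:av} vanish there and preserves the involution/measure-preservation structure used for \eqref{eq:rev}.
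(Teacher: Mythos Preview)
Your proposal is correct and follows essentially the same route as the paper: you verify reversibility via the fact that $R(x)$ is an involutive isometry of the sphere, check the average condition by the one-line computation $\tfrac12\nabla U(x)\cdot(v-R(x)v)=v\cdot\nabla U(x)$, and then read off the corrected kernel with $\psi(s)=(s)_+$ as $(v\cdot\nabla U(x))_+\,\delta_{R(x)v}(\d v')$. The only difference is cosmetic: you spell out the $\nabla U(x)=0$ case and the change-of-variables for reversibility, whereas the paper simply says ``the reversibility is clear''.
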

\begin{proof}
  Once more, the reversibility is clear. 
  The interesting thing to notice here is that
  \[
    \frac{1}{2} \nabla U(x)\cdot (v-R(x)v) 
    =  \nabla U(x)\cdot v.
  \]
  Therefore
 \begin{align*}
   \nabla U(x) \cdot  \int_{v' \in \R^d} \frac{1}{2}(v - v') q_0(x,v ; \d v') 
=  \nabla U(x)\cdot v. 
\end{align*}
The corrected kernel with $\psi(s)=(s)_+$ is given by 
\(
  q(x,v,\d v') 
  = \p{ \nabla U(x)\cdot  v}_+ \delta_{R(x)v}(\d v'),
\)
and we recover the bouncy particle sampler. 
\end{proof}

\section{The Gaussian case}\label{Sec:Gaussian}

\subsection{The process}\label{Sec:Gaussian-def}
In this section we consider the particular case where the 
normalized proposal kernel $k_0(x,v;\d v')$  and the velocity distribution $\gamma$ are Gaussian. In fact, up to a change of variables, we assume without loss of generality that $\gamma$ is the standard Gaussian distribution with mean 0 and variance $\Id$.

Given the particular role of the direction $\nabla  U$, it is natural to
use the following orthogonal decomposition of the (tangent) space at $x$.
Denote $$\ta(x) = \na U(x)/|\na U(x)|$$ if $\na U(x) \neq 0$ and $\ta(x) = 0$ otherwise.
For any $w\in\mathbb R^d$, we write $w= w_T + w_O$ where $w_T = (w\cdot \ta(x)) \ta(x)$
is the projection of $w$ on $\Vect(\ta(x))$ and $w_O$ is orthogonal 
to $\ta(x)$. With this notation,  let $k_0(x,v;\cdot)$ be the distribution of 
the Gaussian random variable $V'$, defined by its 
decomposition $V'=V'_T+V'_O$: 
\begin{align}\label{eq:k_0}
  V'_T &=   \rho_T(x) v_T +  \sqrt{1-\rho_T^2(x)} G_T \\ \nonumber
  V'_O &=   \rho_O(x) v_O +  \sqrt{1 - \rho_O^2(x)}G_O
\end{align}
where $\rho_T(x)$, $\rho_O(x)$ are scalars in $[-1,1]$, 
and $G=G_T+G_O$ is a $d$-dimensional standard unit Gaussian. 
Recall that $k_0(x,v;\cdot)=\text{Law}(V')$ is (up to an 
intensity $\lambda_0$, see below) the proposal kernel for jumps in the velocity. One may therefore interpret the parameters $\rho_T$ and $\rho_O$ 
 as follows: 
\begin{itemize}
  \item the sign of $\rho_T$ encodes whether or not there is a "bounce", that is, a reflection of the component of the velocity that is tangent
  to the gradient of the log-density; 
  \item $\abs{\rho_T }$ encodes the strength of the memory
    for this tangential component: 
    if $\abs{\rho_T }=1$, the memory is perfect (the new tangential 
    component being either equal to the old one or to its opposite);
    on the contrary, $\rho_T(x)=0$ 
    means a full resampling without memory (which is called forward event-chain algorithm in~\cite{DurmusMichelSenechal});
  \item similarly $|\rho_O|$ and the sign of $\rho_O$ encodes respectively the balance between 
    full memory and full resampling,  and whether  or not  the orthogonal component of the velocity "bounces". 
\end{itemize}

It is then easy to remark:
\begin{Lem}\label{Lem:Gaussian-invariant}
 Let $q_0(x,v ; \d v') = \lambda_0(x,v) k_0(x,v ; \d v')$ where $k_0$ is
 defined above by~\eqref{eq:k_0}. For any $x,v\in\R^d$, the average condition~\eqref{eq:av} (equivalently \eqref{eq:av-2}) holds if 
 \[
 \lambda_0(x,v) = \frac{2}{1-\rho_T(x)};
 \]
and this latter condition is necessary when $v\cdot \nabla U(x) \neq 0$.
Moreover, if $\lambda_0$ does not depend on $v$,  then $q_0$ is reversible with respect to $\gamma$ (condition \eqref{eq:rev})).
\end{Lem}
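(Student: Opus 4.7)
The plan is to treat the two claims separately, exploiting the orthogonal decomposition $v=v_T+v_O$ induced by $T(x)$.

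For the averaging condition, I would first observe that $V'_O$ lies in $T(x)^\perp$ by construction, so it contributes nothing to the dot product $V'\cdot\nabla U(x)$. Taking expectations and using that $G_T$ is centered then yields
\[
\left(\int v' \, k_0(x,v;\d v')\right)\cdot \nabla U(x) \;=\; \rho_T(x)\, v_T\cdot \nabla U(x) \;=\; \rho_T(x)\, v\cdot\nabla U(x),
\]
the second equality coming from $T(x)\cdot\nabla U(x)=|\nabla U(x)|$, which forces $v_T\cdot\nabla U(x)=v\cdot\nabla U(x)$. Substituting into the equivalent form~\eqref{eq:av-2} reduces the averaging condition to the scalar identity $\rho_T(x)\,v\cdot\nabla U(x) = (1-2/\lambda_0(x,v))\,v\cdot\nabla U(x)$, which on the set $\{v\cdot\nabla U(x)\neq 0\}$ is equivalent to $\lambda_0(x,v)=2/(1-\rho_T(x))$ and is trivially verified elsewhere. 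This yields sufficiency of the stated formula for every $v$ and necessity on the non-degenerate set; the remaining case $\nabla U(x)=0$ is absorbed by the convention $T(x)=0$, which makes the averaging condition vacuous.

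For the reversibility part, since $\lambda_0$ depends only on $x$, the relation~\eqref{eq:rev} for $q_0$ reduces to the same property for $k_0(x,\cdot;\cdot)$ against $\gamma$. At fixed $x$, the construction~\eqref{eq:k_0} presents $k_0$ as the independent tensor product of a one-dimensional Gaussian AR(1)-type transition with correlation $\rho_T(x)$ on $\Vect(T(x))$ and a $(d-1)$-dimensional analogue with correlation $\rho_O(x)$ on $T(x)^\perp$, while $\gamma$ splits accordingly as a product of standard Gaussians on these two subspaces. I would then invoke the classical fact that a Gaussian transition of the form $w\mapsto \rho w+\sqrt{1-\rho^2}\,G$ with $\rho\in[-1,1]$ is reversible with respect to its standard Gaussian stationary measure, because the induced joint law $(W,W')$ at stationarity is the symmetric bivariate Gaussian with correlation $\rho$. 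Reversibility of a tensor product of reversible kernels against the corresponding product measure then delivers~\eqref{eq:rev}.

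I do not foresee any real obstacle: both assertions boil down to a direct mean computation and to the standard symmetry of bivariate Gaussians. The only mild points to keep track of are the handling of the degenerate locus $\{\nabla U(x)=0\}$ via the convention $T(x)=0$, and the careful distinction in the first statement between sufficiency (which holds for every $v$) and necessity (which is only claimed on $\{v\cdot\nabla U(x)\neq 0\}$).
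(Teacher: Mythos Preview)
Your proposal is correct and follows essentially the same route as the paper: both arguments compute $\nabla U(x)\cdot\int v'\,k_0(x,v;\d v')=\rho_T(x)\,v\cdot\nabla U(x)$ and plug into~\eqref{eq:av-2}, and both reduce reversibility of $q_0$ to that of $k_0$ via the tangential/orthogonal factorization. The only cosmetic difference is that the paper verifies the one-dimensional reversibility by writing out the joint density $k_0^T(x,v_T;v_T')\gamma(v_T)$ explicitly and checking symmetry, whereas you invoke the equivalent fact that the stationary pair is a symmetric bivariate Gaussian; your handling of the degenerate set $\{\nabla U=0\}$ is also a touch more explicit than the paper's.
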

\begin{proof}
To check the second form \eqref{eq:av-2} of the average condition, we compute for $x,v\in\R^d$
\begin{align*}
   \nabla  U(x)\cdot  \PAR{\int v' k_0(x,v;\d v') } 
  &= \nabla  U(x) \cdot \p{  \rho_T(x) v_T + \rho_O(x) v_0} \\
  &= \rho_T(x)  \nabla  U(x)\cdot v.
\end{align*}
Then, if $v\cdot \nabla U(x) \neq 0$, \eqref{eq:av-2} holds  iff   $1-2/\lambda_0(x,v) = \rho_T(x)$. 

Now, if $\lambda_0$ does not depend on $v$, the reversibility of $q_0$ is a consequence of the reversibility of $k_0$. Remark that the (density of the) kernel $k_0$ admits a decomposition $k_0(x,v;v')=k_0^T(x,v_T;v_T')k_0^O(x,v_O;v_O')$, and similarly $\gamma(v) = \gamma(v_T)\gamma(v_O)$, with
\begin{eqnarray*}
k_0^T(x,v_T;v_T')\gamma(v_T)& = &\frac{1}{2\pi \sqrt{1-\rho_T^2(x)}} \exp\left(-\frac{|v_T'- \rho(x) v_T|^2}{2( 1 - \rho_T^2(x))} -\frac{|v_T|^2}{2}\right)\\
& = & k_0^T(x,v_T';v_T)\gamma(v_T')
\end{eqnarray*}
and similarly for the orthogonal part, which concludes the proof of the reversibility.
\end{proof}

\begin{Rem}
  [Return of the bouncy sampler]
The degenerate, deterministic case $\rho_T = -1$, $\rho_O = 1$ gives
$\lambda_0 = 1$ and we get back the bouncy sampler. 
\end{Rem}
From now on we assume that there is no noise on the orthogonal part, that is, $\rho_O(x) =1$, and that $\lambda_0(x,v) = \lambda_0(x)  = 2/(1-\rho_T(x))$ for all $x,v\in\R^d$ and $\psi(s)=(s)_+$ for all $s\in\R$.  Introducing the notation
\begin{equation}
   \eps(x) := \frac{1-\rho_T(x)}{1+\rho_T(x)} \in [0,+\infty]
\end{equation}
we can express (dropping the $x$ dependence notation in the following for simplicity)
\[
\rho_T = \frac{1-\eps^2}{1+\eps^2}\,,\qquad \sqrt{1-\rho_T^2} = \frac{2\eps}{1+\eps^2}\,,\qquad  \lambda_0  = \frac{1+\eps^2}{\eps^2} .
\]
The consequences of the previous discussion are gathered in the following result.

\begin{Lem}[Velocity-jump sampler]\label{lem:velo}
Let $\eps = \eps(x) > 0$ denote a strictly positive function on $\R^d$. 
Let  $q_0$ be the proposal kernel given by
\[
\int \ph(v') q_0(x,v, d v') = \frac{1+\eps^2}{\eps^2} \E \p{\ph(V')},
\]
where the random variable $V'$ is constructed from the 
tangent vector $\ta = \ta(x) = \na U(x)/|\na U(x)|$ (if $\na U(x) \neq 0$, 
and $0$ otherwise), and a standard one-dimensional Gaussian~$G$ 
by the formula:
\begin{equation}\label{eq-V'}
 V' =   v-\frac{2\eps}{1+\eps^2} \p{ \eps v \cdot \ta +  G }   \ta. 
\end{equation}
Consider the PDMP generator $\calL=\calT+\calF$  where 
$\calT = v \cdot \nabla_x$ and $\calF$ is the velocity jump
operator given by correcting $q_0$: 
\[ 
\calF(\ph)(x,v) =   \int \p{\ph(v')-\ph(v)} q(x,v, d v')
= \frac{1}{2} \int \p{\ph(v')-\ph(v)}\p{\nabla_x U \cdot (v-v') }_+ q_0(x,v, d v'). 
\] 
 We have the following properties: 
 \begin{enumerate}
 \item the proposal kernel~$q_0$ satisfies the average condition~\eqref{eq:av} and is reversible with respect to 
 the unit Gaussian distribution in velocity variables.
 \item Consequently, the process with generator $\calL$ leaves 
 the target distribution~$\pi$ invariant. 
\end{enumerate}
\end{Lem}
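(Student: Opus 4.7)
The strategy is to identify the process of Lemma~\ref{lem:velo} as a particular case of the Gaussian construction studied in Lemma~\ref{Lem:Gaussian-invariant}, and then to invoke Lemma~\ref{lem:invariance} to obtain invariance of $\pi$.

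First, I would show that the random variable $V'$ defined by \eqref{eq-V'} fits the form \eqref{eq:k_0} with $\rho_O(x)=1$ and $\rho_T(x)=(1-\eps^2)/(1+\eps^2)$. Since the correction $-\frac{2\eps}{1+\eps^2}(\eps v\cdot\ta+G)\ta$ lies in $\Vect(\ta)$, one immediately reads $V'_O=v_O$, so no noise is added on the orthogonal part. For the tangential part, writing $v\cdot\ta=v_T\cdot\ta$ and collecting the coefficient of $\ta$ gives
\[
V'_T \;=\; \left(1-\frac{2\eps^2}{1+\eps^2}\right)v_T - \frac{2\eps}{1+\eps^2}\,G\,\ta \;=\; \frac{1-\eps^2}{1+\eps^2}\,v_T + \frac{2\eps}{1+\eps^2}\,\widetilde{G}\,\ta,
\]
where $\widetilde G:=-G$ is again a standard one-dimensional Gaussian. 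This is exactly \eqref{eq:k_0} with the parameters announced. I would then check directly that $\frac{2}{1-\rho_T(x)}=\frac{1+\eps^2}{\eps^2}$, which matches the intensity $\lambda_0(x,v)=\lambda_0(x)$ chosen in the statement.

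Second, since $\lambda_0$ does not depend on $v$, Lemma~\ref{Lem:Gaussian-invariant} directly provides both the reversibility of $q_0$ with respect to $\gamma$ (condition \eqref{eq:rev}) and the average condition \eqref{eq:av}. This establishes item~1. Note that the slight asymmetry in the Gaussian-case lemma (which was stated with a $+$ sign in front of the Gaussian noise in \eqref{eq:k_0}) is harmless because of the symmetry $G\leftrightarrow -G$ in distribution, so the reversibility argument transfers verbatim.

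Third, for item~2, I would simply apply Lemma~\ref{lem:invariance}: the choice $\psi(s)=(s)_+$ obviously satisfies \eqref{eq:conditionOnPsi}, and the corrected kernel $q$ defined in the statement is exactly \eqref{eq:corrected_kernel} applied to our $q_0$. Lemma~\ref{lem:invariance} then yields \eqref{eq:genF} for the operator $\calF$, and since $\calD=0$ here, the general discussion following \eqref{eq:genF} (namely $\int \calT\phi\,\d\pi=-\int\calF\phi\,\d\pi$ by integration by parts in $x$) gives $\pi(\calL\phi)=0$ for all $\phi\in\calC_c^\infty(\R^{2d})$, hence invariance of $\pi$ under the standard core assumption invoked in the paper. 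No real obstacle is expected: the whole content of the lemma is the algebraic identification of the parameters $(\rho_T,\rho_O,\lambda_0)$, the rest being a citation of the preceding general results.
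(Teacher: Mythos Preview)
Your proposal is correct and follows essentially the same route as the paper: the lemma is presented there as a direct consequence of the preceding discussion, which amounts precisely to identifying \eqref{eq-V'} with the Gaussian kernel \eqref{eq:k_0} for $\rho_O=1$, $\rho_T=(1-\eps^2)/(1+\eps^2)$, $\lambda_0=(1+\eps^2)/\eps^2$, and then invoking Lemma~\ref{Lem:Gaussian-invariant} and Lemma~\ref{lem:invariance}. Your algebraic verification of these identifications and the observation that the sign of $G$ is irrelevant in distribution are exactly what is needed.
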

In particular, $\eps=+\infty$ is the full bouncy particle, $\eps=1$ the full resampling,
$\eps<1$  a partial memory, and $\eps \to 0$ corresponds to small changes at an increasing jump rate. 

%
%
%
%
For theoretical and practical reasons, it is interesting to 
derive a more explicit  formula for  the corrected kernel.
\begin{The}[Corrected jump rate]\label{Theo:corrected-jump}
The corrected kernel associated with the velocity jumps process of Lemma~\ref{lem:velo} is given by
  \[ \int \phi(v') q(x,v, \d v') =  
  \frac{\abs{\nabla U}}{\eps}\esp{ \phi\p{ x,v -   \frac{2\eps}{1+\eps^2} \p{ \eps v \cdot \ta + G} \ta } \p{ \eps v \cdot \ta  + G}_+ }.\]
  As a consequence, the corrected jump rate $\lambda$ is given by
  \[
    \lambda(x,v) = \frac{\abs{\nabla U(x)}}{\eps(x)} \esp{%
      \PAR{ v \cdot \ta(x) \eps(x) +  G}_+} = \frac{ \abs{\nabla U(x)}}{\eps(x)} \Theta(\eps(x) v \cdot \ta(x))     
  \]
  where
  \[
    \Theta(u) := \esp{\p{u+G}_+}=  u \P(G > -u) + \frac{1}{\sqrt{2 \pi}} \e^{ - u^2/2}.
  \]
\end{The}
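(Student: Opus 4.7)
The proof is a direct computation, so the plan is simply to unwind the definitions and then evaluate the Gaussian expectation defining $\Theta$.

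First I would use the representation of $q_0$ from Lemma~\ref{lem:velo}: for any test function $\varphi$,
$$\int \varphi(v')\,q_0(x,v;\d v') = \frac{1+\eps^2}{\eps^2}\,\E\bigl[\varphi(V')\bigr],$$
with $V' = v - \tfrac{2\eps}{1+\eps^2}(\eps v\cdot\ta + G)\ta$. Then the definition of the corrected kernel gives $q(x,v;\d v') = \tfrac{1}{2}(\nabla U(x)\cdot(v-v'))_+\, q_0(x,v;\d v')$, and I would compute $\tfrac{1}{2}\nabla U(x)\cdot(v-V')$ explicitly. Since $v-V' = \tfrac{2\eps}{1+\eps^2}(\eps v\cdot\ta + G)\ta$ and $\nabla U(x)\cdot\ta(x) = |\nabla U(x)|$ (including the degenerate case $\nabla U(x)=0$ where everything vanishes trivially), this inner product simplifies to $\tfrac{\eps|\nabla U(x)|}{1+\eps^2}(\eps v\cdot\ta + G)$. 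Plugging this back into the integral against $q$ yields the prefactor
$$\frac{1+\eps^2}{\eps^2}\cdot \frac{\eps|\nabla U(x)|}{1+\eps^2} = \frac{|\nabla U(x)|}{\eps},$$
and the first claim of the theorem follows.

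Specializing to $\varphi\equiv 1$ then gives $\lambda(x,v) = \tfrac{|\nabla U(x)|}{\eps}\,\E[(\eps v\cdot\ta(x) + G)_+]$, so there only remains to check that $\Theta(u) := \E[(u+G)_+]$ has the stated closed form. This is a one-line Gaussian computation: splitting $(u+G)_+ = u\mathbf{1}_{G>-u} + G\mathbf{1}_{G>-u}$, taking expectations and using $\int_{-u}^{+\infty} g\,e^{-g^2/2}\tfrac{\d g}{\sqrt{2\pi}} = \tfrac{1}{\sqrt{2\pi}}e^{-u^2/2}$ yields $\Theta(u) = u\,\P(G>-u) + \tfrac{1}{\sqrt{2\pi}}e^{-u^2/2}$.

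There is no real obstacle; the only point requiring a comment is the case $\nabla U(x)=0$, where by convention $\ta(x)=0$, so $V'=v$ and the factor $(\nabla U\cdot(v-v'))_+ = 0$ makes both sides of the asserted identity equal to zero, consistent with the formula since $|\nabla U(x)|/\eps(x)$ multiplies it.
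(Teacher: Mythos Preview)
Your proof is correct and follows exactly the same approach as the paper, which simply writes $\int \phi(v') q(x,v,\d v') = \tfrac12 \lambda_0(x)\,\E[\phi(x,V')(\nabla U\cdot(v-V'))_+]$ and then says ``replacing $V'$ by its expression concludes.'' You have merely filled in the arithmetic (the simplification of $\tfrac12\nabla U\cdot(v-V')$ and the cancellation in the prefactor), added the elementary verification of the closed form for $\Theta(u)$, and handled the degenerate case $\nabla U(x)=0$---all of which the paper leaves implicit.
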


\begin{proof}
 By definition,
  \[ \int \phi(v') q(x,v, \d v') =  
  \frac12 \lambda_0(x)  \esp{ \phi\p{ x,V'} \p{ \nabla U \cdot (v-V')}_+ }.\]
  Replacing $V'$ by its expression given by \eqref{eq-V'} concludes.
\end{proof}

\subsection{Convergence toward the Hamiltonian dynamics}\label{Sec:Gaussian-limitHD}

From now on, we denote by $\calL_\eps$ the generator of the velocity jump process with kernel $q=q_\eps$ given by Theorem~\ref{Theo:corrected-jump} for some positive function $\varepsilon$ on $\R^d$, i.e.
\begin{equation}\label{eq:genLespi}
\calL_\eps \phi  (x,v) 
  = v\cdot \na_x \phi(x,v) + \int_{v' \in \R^d} \p{ \phi(x,v') - \phi(x,v)}  q_\eps(x,v; \d v')\,.
\end{equation}
It can then be formally expanded using Taylor's formula as 
\begin{equation}
 \calL_\eps(\phi) = v \cdot \nabla_x \phi + \sum_{n=1}^{+\infty} \frac{\abs{\nabla U} }{n !} D^n_v \phi \p{T, \ldots , T} \eps^{n-1} \p{ \frac{-2 }{1+\eps^2} }^n \esp{ \p{\eps v \cdot T +  G     }_+ ^{n+1} }\,.
\end{equation}
For $n\geqslant 1$,
\[\esp{\p{u+G}_+^{n+1}} = \esp{G_+^{n+1}}  + (n+1) u \esp{G_+^{n}}  + \underset{u\rightarrow 0}{\mathcal O}(u^2)\,,\]
with 
\[
\mathbb E(G_+) = 1/\sqrt{2\pi}\,,\qquad \mathbb E(G_+^2) = 1/2\,,\qquad \mathbb E(G_+^3) = \sqrt{2/\pi}\,.
\]
 As a consequence, as $\varepsilon$ vanishes, we formally get back the Hamiltonian dynamics
\[
\calL_\eps(\phi) = v \cdot \nabla_x \phi  -    \nabla U \cdot \nabla_y \phi + \mathcal O(\eps)\,,
\]
and at first order in $\eps$, we obtain a (degenerate) Langevin diffusion
\begin{align*}
\calL_\eps(\phi) &= v \cdot \nabla_x \phi  -    \nabla U \cdot \nabla_y \phi+ \varepsilon \frac{4 |\nabla U|}{\sqrt{2\pi}}\b{-(v\cdot T) T\cdot\nabla_v \phi +  D_v^2\phi(T, T) }+\mathcal O(\eps^2) \\
&= v \cdot \nabla_x \phi  -    \nabla U \cdot \nabla_y \phi+\eps  \frac{4 |\nabla U|}{\sqrt{2\pi}}\b{e^{\frac{|v|^2}{2 }}  T\cdot \nabla_v \p{e^{-\frac{|v|^2}{2 }} T\cdot \nabla_v \phi} } +\mathcal O(\eps^2)\,,
\end{align*}
which can be interpreted as the Langevin process  that is degenerate along the force direction;  is reversible (up to velocity reversal) with respect to the target distribution $\pi$, and has a typical relaxation time of order $1/(\abs{\nabla_x U} \eps  ) $.



\bigskip

We now give conditions under which the convergence of the velocity jump process towards an Hamiltonian dynamics can be proven rigorously. It is remarkable that the limit can be identified as soon as the martingale problem for the \emph{deterministic} Hamiltonian dynamics is well-posed. If $\nabla U$ is Lipschitz, this is a consequence of the standard Cauchy-Lipschitz theory; the minimal conditions on $\nabla U$ being still an open problem. We define first martingale problems in $\R^d$.

\begin{Def} A \cadlag  \ random process $\p{Z_t}_{t \geq 0}$ in $\R^{d}$ with initial distribution $\mu$ is solution to the martingale problem associated with $(\mu,L,C^\infty_c(\R^d))$, where $L$ is a Markov generator, if for any $\ph \in C^\infty_c(\R^d)$ the process
\[
t \mapsto \ph(Z_t) - \int_0^t L\ph (Z_s) d s
\]
is a martingale with respect to the natural filtration of $Z$. We say that uniqueness holds if all solutions have the same probability distribution on the usual Polish space of \cadlag\  trajectories.
\end{Def}
\begin{The}\label{th:conv}
Let $(\varepsilon_n)_{n\in\N}$ be a sequence of strictly positive measurable functions on $\R^d$ that vanishes uniformly on all compact sets as $n\rightarrow +\infty$. Denote $\mathcal L_{\varepsilon_n}$ the associated PDMP generator and denote
\[
\calL_0  \eqdef  v \cdot \nabla_x -  \nabla U \cdot \nabla_v \,,
\]
and consider $\mu \in \mathcal P(\R^{2d})$ an initial distribution. Assume that
\begin{itemize}
\item For each $n$, the velocity jump process associated to $\calL_{\varepsilon_n}$ is defined for all time (the sequence of jump times converges to $+\infty$).
\item $\nabla U$ is continuous and the martingale problem associated with $(\mu, \calL_0, C^\infty_c(\R^{2d}))$ is well-posed on $\R^{2d}$.
\end{itemize}

Then, as $n\rightarrow +\infty$, the velocity jump process associated to $\calL_{\varepsilon_n}$ converges in distribution in the space of \cadlag \  trajectories endowed with the Skorohod topology towards the unique martingale solution of the Hamiltonian dynamics $\calL_0$.
\end{The}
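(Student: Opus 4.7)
The strategy is to apply a martingale-problem convergence theorem of Ethier--Kurtz type, namely the forthcoming Theorem~\ref{th:conv_gen} invoked by the authors. The three ingredients are: (i) locally uniform convergence $\calL_{\eps_n}\phi \to \calL_0 \phi$ for every $\phi \in \calC_c^\infty(\R^{2d})$; (ii) tightness of the laws of the processes in Skorohod space; and (iii) the assumed well-posedness of the martingale problem for $\calL_0$.

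For (i), I would make the formal Taylor expansion preceding the theorem rigorous. Given $\phi \in \calC_c^\infty(\R^{2d})$ with support in a compact $K$, Theorem~\ref{Theo:corrected-jump} writes $\calF_\eps \phi(x,v) = (|\na U(x)|/\eps)\, \E[(\phi(x,V')-\phi(x,v))(\eps v\cdot T + G)_+]$ with $V'-v$ of order $\eps$. A second-order Taylor expansion of $\phi$ in the variable $v$, the pointwise identity $s\, s_+ = s_+^2$, and the explicit moments $\E G_+=1/\sqrt{2\pi}$, $\E G_+^2 = 1/2$, yield $\calL_{\eps_n}\phi = \calL_0\phi + R_n$ with $R_n$ bounded on $K$ by a constant depending on $\phi$, $K$ and $\sup_K|\na U|$ times $\sup_K \eps_n$. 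Since $\eps_n\to 0$ uniformly on compacts and $\na U$ is continuous, this delivers the desired locally uniform convergence.

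For (ii), the main technical step, I would proceed by localization. Fix an exhaustion $K_m \uparrow \R^{2d}$ by compacts and set the exit times $\tau^n_m = \inf\{t \geq 0 : Z^n_t \notin K_m \text{ or } Z^n_{t^-}\notin K_m\}$. On $K_m$ the inequality $\Theta(u)\leq |u|+1/\sqrt{2\pi}$ yields $\lambda_{\eps_n}\leq C_m/\eps_n$, while from the explicit form of $V'$ each jump changes the velocity by at most $C_m\eps_n$; the expected jump contribution to any smooth test function is thus bounded uniformly in $n$. Combining this with the uniform bound on $\calL_{\eps_n}\phi$ from step~(i), an Aldous--Rebolledo-type criterion applied to the martingales $M^n_\phi(t)=\phi(Z^n_{t\wedge\tau^n_m}) - \int_0^{t\wedge \tau^n_m}\calL_{\eps_n}\phi(Z^n_s)\,\d s$ for $\phi$ ranging over a convergence-determining class of compactly supported functions gives the tightness of the stopped processes $(Z^n_{\cdot \wedge \tau^n_m})_n$. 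To remove the localization I would establish $\lim_{m\to\infty}\sup_n\P(\tau^n_m\leq T)=0$ on every finite interval $[0,T]$; here the finite-time non-explosion of the Hamiltonian limit (guaranteed by well-posedness of $\calL_0$) is transferred back to the prelimit by a standard compactness argument comparing the stopped processes with their limit.

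For (iii), any subsequential Skorohod limit $Z^\infty$ has initial law $\mu$, and combining the locally uniform convergence of $\calL_{\eps_n}\phi$ with Skorohod convergence of stopped trajectories and dominated convergence lets one pass to the limit in the martingale identity, showing that $t\mapsto \phi(Z^\infty_t) - \int_0^t \calL_0\phi(Z^\infty_s)\,\d s$ is a martingale for every $\phi\in\calC_c^\infty(\R^{2d})$. Uniqueness of the $(\mu,\calL_0,\calC_c^\infty(\R^{2d}))$ martingale problem then forces $Z^\infty$ to be the Hamiltonian dynamics, so the full sequence converges. I expect the hard part to be the non-explosion control in step~(ii): on $\R^{2d}$ without any a priori growth assumption on $U$, the jump rates scale like $|\na U|/\eps_n$, and ruling out uniform-in-$n$ escape in small time is exactly what Theorem~\ref{th:conv_gen} is designed to handle abstractly.
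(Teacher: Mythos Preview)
Your proposal is correct and follows essentially the same route as the paper. The paper's proof reduces entirely to checking the locally uniform convergence $\sup_K|\calL_{\eps_n}\phi-\calL_0\phi|\to 0$ for every $\phi\in\calC_c^\infty(\R^{2d})$ and then invokes Theorem~\ref{th:conv_gen}; your steps~(ii) and~(iii) are in effect a preview of what that theorem does internally, and your step~(i) (Taylor expansion of $\phi$ in $v$, the identity $s\,s_+=s_+^2$, and the moment $\E[G_+^2]=1/2$) is exactly the computation the authors carry out.
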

\begin{proof}
The proof follows from a general result, Theorem~\ref{th:conv_gen}, postponed to an Appendix section. Indeed, according to Theorem~\ref{th:conv_gen}, it is sufficient    to check that for any $\ph \in C^\infty_c(\R^{2d})$ and any compact $K \subset \R^{2d}$
\[
\lim_{n\rightarrow+\infty} \sup_{K} \abs{\calL_{\eps_n} \ph - \calL_0 \ph} = 0 .
\]
Using the definition of $\calL_{\eps_n}$ from Equation~\eqref{eq:genLespi}, and denoting by $u_n=u_n(x,v)=\eps_n(x) v\cdot \ta(x) + G$, 
(where $\ta(x)=\nabla U(x)/\abs{\nabla U(x)}$), 
the difference $(\calL_{\eps_n} \ph - \calL_0 \ph)(x,v)$ may be rewritten as: 
\[
 \esp{  \frac{\abs{\nabla U(x)} (u_n)_+}{\eps_n(x)}\p{ \phi\p{ v -   \frac{2\eps_n u_n(x)}{1+\eps_n(x)^2}  \ta(x) } - \phi(v)}  +   \na U(x)\cdot \na_v \phi(v) }. 
\]
Omitting the dependency in $x$ in the notations for legibility,
we may apply Taylor's theorem at the first order on the difference
$(\phi\p{ v -   \frac{2\eps_n u_n}{1+\eps_n^2}  \ta} - \phi(v))$ 
to get:
\begin{align*}
|(\calL_{\eps} \ph - \calL_0 \ph)(x,v)| 
& \leqslant  \left| 1- \frac{2 \esp{(u_n)_+^2}}{1+\eps_n^2}\right| |\na U\cdot \na_v \phi(v)  | + |\nabla U| \|\nabla^2 \phi\|_\infty \eps_n \esp{(u_n)_+^3}\,. 
\end{align*}
Since $\esp{(u_n)_+^2}$ converges to $1/2$ uniformly on all compact sets and $\esp{(u_n)_+^3}$ is uniformly bounded in $n$ on all compact sets, the right hand side 
vanishes uniformly on all compact sets of $\R^{2d}$ as $n\rightarrow +\infty$.
\end{proof}

\begin{Rem}
More generally, considering a limit generator $\calL_0+\calD_0$ for some dissipative $\calD_0$, the proof of Theorem~\ref{th:conv} is straightforwardly adapted to get the convergence of  the processes associated to generators $\calL_{\varepsilon_n} + \calD_{\varepsilon_n}$ with $\calD_{\varepsilon_n} \phi \rightarrow \calD_0\phi$ for all $\phi\in C^\infty_c(\R^{2d})$. For instance, that way we can  design velocity jump processes that converge toward the Langevin diffusion or the HMC process.
\end{Rem}

\subsection{Drift limit and factorization}\label{Sec-limit-factorize}

As discussed in Section \ref{sec:GeneralSetting}, if the forces are decomposed as $\nabla U(x) = \sum_{i=1}^N \xi_i(x)$ for some vector fields $\xi_i$, then we can consider the operators given by
\[\calF_i \phi  (x,v)  = \int_{v' \in \R^d} \p{ \phi(x,v') - \phi(x,v)}  q_i(x,v; \d v'),\]
with
  \[ \int \phi(v') q_i(x,v, \d v') =  
  \frac{\abs{\xi_i}}{\eps_i}\esp{ \phi\p{ v -   \frac{2\eps_i}{1+\eps_i^2} \p{ \eps_i v \cdot \ta_i +  G} \ta_i } \p{ \eps_i v \cdot \ta_i  +  G}_+ }\]
  where $G$ is a one-dimensional standard Gaussian variable, $x\mapsto \eps_i(x)$ is a positive function and $\ta_i(x) = \xi_i(x)/|\xi_i(x)|$ if $\xi_i(x)\neq 0$ and $\ta_i(x)=0$ otherwise.
  In other words, the process with generator $\mathcal T + \mathcal F_i$ is exactly the velocity jump process introduced in Section~\ref{Sec:Gaussian-def}, except that $\nabla U$ is replaced everywhere by $\xi_i$. 
  In particular, the previous results are straightforwardly extended: from Lemma~\ref{Lem:Gaussian-invariant}, the generators $\mathcal F_i$ satisfy \eqref{eq:genFi} (and more precisely \eqref{eq:F*} with $\nabla U$ replaced by $\xi_i$), so that $\mathcal L = \mathcal T + \sum_{i=1}^N \mathcal F_i$ satisfies $\int \mathcal L\phi d \pi = 0$ for all $\phi\in\mathcal C^\infty_c(\R^{2d})$. Similarly, from the computations of Section~\ref{Sec:Gaussian-limitHD}, 
  \[\mathcal F_i (\phi)  =   -    \xi_i\cdot \nabla_y \phi + \mathcal O(\eps_i)\,,\]
  and thus we still get the convergence toward the Hamiltonian dynamics, since
 \[ \calL(\phi) = v \cdot \nabla_x \phi  -     \nabla U \cdot \nabla_y \phi + \mathcal O(\underset{1\leqslant i\leqslant N}\max\eps_i)\,.\]
 Let us give two examples of such a factorization.
 
 \subsubsection{Gibbs velocity jump processes}\label{Sec-Gibbs}
 
 For $i\in\cco 1,d\ccf$, set $\xi_i(x) = \partial_{x_i} U(x) e_i$, where $e_i$ is the $i^{th}$ vector of the canonical basis and
 \[\calL = \calT + \sum_{i=1}^d \calF_i = \sum_{i=1}^d \p{ v_i \partial_{x_i} + \calF_i}\]
 where $\calF_i$ is defined as above, with some $\varepsilon_i$. The corresponding process can be seen as a (kinetic) Gibbs sampler: indeed, each generator $v_i \partial_{x_i} + \calF_i $ leaves invariant the conditional law $(x_i,v_i)\mapsto \pi(dxdv)$. 
 When $\eps_i(x) = +\infty$ for all $i$, we recover the zig-zag process, which may thus
 be seen as a Gibbs version of the bouncy   sampler (remark that, when $\varepsilon = +\infty$, the norm of the velocity is unchanged at jump times, so that  although $\pi = \nu\otimes\gamma$ is indeed invariant for $\calL$ with a a Gaussian distribution $\gamma$, it won't be ergodic).
 
 For a general choice of $\varepsilon_i$, this factorization ensures the following property: in the case where the target law is a tensor product of one-dimensional laws, i.e. if $U(x) = \sum_{i=1}^d U_i(x_i)$ for some one-dimensional  potentials $U_i$, then the coordinates of the corresponding kinetic Gibbs process are independent one-dimensional processes.
 
 Note that
 \begin{eqnarray*}
\calL_\eps(\phi) &=& v \cdot \nabla_x \phi  -     \nabla U \cdot \nabla_y \phi+  \sum_{i=1}^d \varepsilon_i \frac{4 |\partial_{x_i} U|}{\sqrt{2\pi}}\b{-v_i \partial_{v_i} \phi +   \partial_{v_i}^2\phi }+\mathcal O(\underset{1\leqslant i\leqslant N}\max\eps_i^2) \,.
\end{eqnarray*}
The fact that, in that case, the order one term is a non-degenerate Langevin diffusion is reminiscent of the fact the Zig-Zag process is irreducible in cases where the bouncy sampler is not, see \cite{BierkensRobertsZitt}.
 
 \subsubsection{Multi-time-stepping}\label{Sec-Multitimestep} 
 
 Suppose that $\na U=\xi_1+\xi_2$ where $\xi_1$ is large and numerically cheap to compute by comparison with $\xi_2$, smaller but numerically more intensive. To fix ideas, suppose that $\|\xi_i\|_\infty \leqslant L_i$ for $i=1,2$ with known constants $L_1\gg L_2$.   For $i=1,2$, take $\eps_i(x)=\eps_0$ for some $\eps_0>0$. Then,  in order to sample a trajectory of the process corresponding to the splitting $\na U = \xi_1+\xi_2$, as detailed in Section~\ref{Sec:Simulation}, $\xi_i$ will be computed at a rate $L_i/\eps_0$. Hence, the splitting reduces the number of computations of $\xi_2$.  This extends the strategy of \cite{Monmarche-MD} where $\varepsilon_1 = 0$ and $\varepsilon_2 = +\infty$ (bounce/drift process).

\subsection{Non-irreducibility}\label{Sec:non-irreductible}

The bouncy particle sampler and the Hamiltonian dynamics are well-known to be both non-irreducible in general. There are in fact non-irreducible counterexamples for all the processes with generator $\calL_\varepsilon = \calT + \calF_\eps$,  $\varepsilon >0$ in the case with no additional noise ($\calD = 0$). For instance, for a symmetric Gaussian target (or more generally any target with radial potential, i.e. that is invariant by isometries preserving the origin) in dimension larger than one, $\nabla U(X_t)$ being collinear to $X_t$, note that $X_t,V_t \in \mathrm{span}(X_0,V_0)$ for all $t\geqslant 0$. Moreover, assuming that $X_0$ and $V_0$ are not collinear, even within this two-dimensional plane, the process is not irreducible. Indeed, in the following, still for a symmetric Gaussian target, suppose that $d=2$ and $(x_0,v_0) \in \R^2\times\R^2$ with $\mathrm{span}(x_0,v_0)=\R^2$. Remark that $X_t\wedge V_t := X_t^1 V_t^2 - X_t^2 V_t^1$ is  unchanged by the free transport and by the jumps, hence is constant along time. In particular, starting from a deterministic condition $(x_0,v_0)$ the law of the process will never converge to the Gaussian target measure. More precisely, we expect the law of the process to converge to the law of a standard Gaussian variable $(X,V)$ on $\R^4$ conditioned to $X\wedge V = x_0\wedge v_0$ (since the standard Gaussian on $\R^4$ is invariant for the process, so is this conditional law). Even if we are only concerned with the law of $X$, this induces a bias (see the numerical section). \medskip

\section{Hypocoercivity}\label{Sec:hypoco}

The question of long-time convergence and ergodicity for velocity jump samplers have been addressed in various cases in \cite{BierkensRobertsZitt,Doucet2019,DurmusGuillinMonmarche2018_Bouncy} with a Meyn-Tweedie approach and in \cite{Andrieu} with the $L^2$ hypocoercivity method of Dolbeault-Mouhot-Schmeiser \cite{DMS}. Our approach will be similar to the latter. Since the process is not irreducible in general, a dissipative part is added for the velocities. In all this section, the target measure $\pi$ is given by \eqref{eq:gibbs} with $\gamma$ the standard (mean $0$, variance $\Id$) Gaussian distribution on $\R^d$ and  we consider a kinetic process with generator $\calL  = \calT  + \calF    + \calD$ as in Section \ref{sec:GeneralSetting} and $\calF $ is the operator defined in Lemma~\ref{lem:velo} for some non-negative function $\varepsilon$ on $\R^d$.

We would like to emphasize that we will only conduct a formal study, disregarding in particular the question of domains and extensions of the operators involved. The technical arguments to make the proofs valid would be exactly those of \cite{Andrieu}, and thus we omit them for the sake of clarity and in order to focus on the (formal) computations.

%
%
\begin{Ass}\label{AssumHypoco}
The dissipative part $\calD$ may be written as
$\calD = \eta(x)\calD_0$, where $\eta:\R^d\rightarrow \R_+$ is such that  
\[
\forall x\in\R^d\,,\qquad 0<\underline \eta<\eta(x) < \overline \eta \p{1+|\na U(x)|},
\]
for some $\overline \eta \geqslant \underline\eta>0$, and $\calD_0$ is a self-adjoint operator on $L^2(\gamma)$ such that $\calD_0(v)=-v$ and with a spectral gap of 1, in the sense that, for all nice $g\in L^2(\gamma)$,
\[
  \bracket{\calD_0 g,g}_{L^2(\gamma)}  \leqslant  -\| g -\int g \d \gamma\|_{L^2(\gamma)}^2\,.
\]
Moreover, $U\in\mathcal C^2(\R^d)$ and there exists $C_1\geqslant 0$ such that
\begin{equation}
\na^2 U(x) \succeq -C_1 I \tag{$C_1$}
\end{equation}
(in the sense of positive symmetric matrices) for all $x\in\R^d$ and
\begin{equation}\label{ConditionUHypoco}
\underset{|x|\rightarrow\infty}\liminf \p{\frac12 |\na U|^2 - \Delta U }  >   0\,.
\end{equation}
Finally, $\calT = v \cdot \na_x$ and $\calF$ belongs to the class of operators defined in Lemma~\ref{lem:velo}. 
\end{Ass}

\begin{Rem} The three classical dissipative operators $\calD_0$ given by~\eqref{eq:D1}, \eqref{eq:D2} and~\eqref{eq:D3} are all self-adjoint in $L^2(\gamma)$ with a spectral gap of 1 and with $\calD_0(v)=-v$.
\end{Rem}


The condition \eqref{ConditionUHypoco} classically implies that the measure $\nu$ satisfies a Poincar\'e inequality with some constant $c_P>0$: for all $f\in H^1(\nu)$,
\begin{equation}
\| f - \nu f\|^2_{L^2(\nu)} \ \leqslant \ \frac1{c_P} \| \na_x f\|_{L^2(\nu)}^2\,. \tag{$c_P$}
\end{equation}
It also implies that there exist $C_2>0$ such that 
\begin{equation}\forall x\in\R^d\,,\qquad \Delta U(x) \leqslant C_2 + |\na U(x)|^2/2\,.  \tag{$C_2$}
\end{equation}

In the following, $\|\cdot\|$ and $\bracket{\cdot}$ stands respectively for the norm and scalar product  in $L^2(\pi)$. 
 We denote by $m_2$ (respectively $m_4$) the second (respectively fourth) moment of $\gamma$: 
 \begin{align*}
 m_2 &= \int \abs{v}^2 d\gamma(v) = d, 
 &
 m_4 &= \int \abs{v}^4 d\gamma(v) = d(d+2). 
 \end{align*}
 Let $(P_t)_{t\geqslant 0}$ be the Markov semi-group with generator $\calL$.

\begin{The}[Exponential convergence in $\L^2$] 
\label{ThmHypoco}
 Under Assumption \ref{AssumHypoco}, for all $f\in L^2(\pi)$ and all $t\geqslant 0$,
\[ \|(P_t -\pi)f\| ^2 \ \leqslant \ \frac43 e^{-\kappa t }\|(I-\pi) f\| ^2 \,,\]
where $\kappa$ is given by:
\[\frac{1}{\kappa} \ = \ \frac{6}{\underline{\eta}} \p{1+ \frac1{ c_P^2}\p{1+\frac{C_1}{2 c_P }}\p{1+4C_2+16c_P^2} \p{\overline\eta /\sqrt{d} + 5\sqrt{1+2/d^2} + 4/d}^2}. \]

\end{The}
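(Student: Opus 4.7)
The plan is to follow the $L^2$ hypocoercivity method of Dolbeault--Mouhot--Schmeiser \cite{DMS}, adapted to PDMP samplers as in \cite{Andrieu}. Introduce the projection $\Pi:L^2(\pi)\to L^2(\pi)$ onto velocity-averaged functions, $\Pi f(x,v)=\int f(x,w)\gamma(\mathrm{d}w)$, and the auxiliary operator
\[
A = \bigl(I + (\calT\Pi)^*(\calT\Pi)\bigr)^{-1}(\calT\Pi)^*,
\]
together with the modified entropy $H_\alpha(f)=\tfrac12\|f\|^2+\alpha\langle Af,f\rangle$ for a small parameter $\alpha>0$ to be tuned. First I would check the equivalence $\|Af\|\leq \tfrac12\|(I-\Pi)f\|$ by spectral calculus on $(\calT\Pi)^*(\calT\Pi)$, giving $H_\alpha(f)\asymp \tfrac12\|f\|^2$ for $\alpha<1/2$.

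Next I would identify $(\calT\Pi)^*(\calT\Pi)$ acting on $x$-only functions as $g\mapsto-\Delta g+\nabla U\cdot\nabla g$, the generator of the overdamped Langevin dynamics for $\nu$; the Poincar\'e inequality \eqref{ConditionUHypoco} (with constant $c_P$) gives the macroscopic coercivity estimate $\langle A\calT\Pi f,\Pi f\rangle \geq \frac{c_P}{1+c_P}\|\Pi f-\pi f\|^2$. On the microscopic side, the assumption $\calD=\eta\calD_0$ with $\calD_0$ self-adjoint of spectral gap one and $\eta\geq\underline\eta$ yields $-\langle\calD f,f\rangle\geq \underline\eta\|(I-\Pi)f\|^2$. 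Differentiating $H_\alpha(P_tf)$ gives
\[
-\tfrac{\mathrm{d}}{\mathrm{d}t}H_\alpha(P_tf) \;=\; -\langle\calL f,f\rangle - \alpha\bigl\langle (A\calL+\calL^*A)f,f\bigr\rangle,
\]
and I would split $\calL=\calT+\calF+\calD$ and analyse each contribution: the diagonal term provides the microscopic coercivity plus a non-positive contribution from the symmetric part of $\calF$ (by reversibility of $q_0$ and the form of $\psi$), while the modified term produces the leading good term $\alpha\langle A\calT\Pi f,\Pi f\rangle$ together with four families of error terms of the types $\alpha\langle A\calT(I-\Pi)f,f\rangle$, $\alpha\langle A\calF f,f\rangle$, $\alpha\langle A\calD f,f\rangle$ and their adjoints.

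The hard part is bounding the error terms involving $\calF$. The jump rate $\lambda_\varepsilon(x,v)=|\nabla U(x)|\Theta(\varepsilon v\cdot T)/\varepsilon(x)$ is unbounded in $x$, so $\calF f$ cannot be dominated by $\|(I-\Pi)f\|$ alone. The idea is to use the explicit expansion of $\calF$ from Theorem~\ref{Theo:corrected-jump} (which shows $\calF\phi=-\nabla U\cdot\nabla_v\phi+O(\varepsilon)$) to decompose $\calF=\calF^{(0)}+\varepsilon R$ where $\calF^{(0)}$ acts like a transport in velocity along $\nabla U$; moments of $\Theta(\varepsilon v\cdot T+G)_+$ with respect to $\gamma$ are controlled by the second and fourth moments $m_2=d$, $m_4=d(d+2)$. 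The factor $|\nabla U|$ that appears is absorbed using integration by parts (bringing out $\Delta U$ and $|\nabla U|^2$) combined with the bound $\Delta U\leq C_2+\tfrac12|\nabla U|^2$ from \eqref{ConditionUHypoco} and the Hessian bound $(C_1)$; after applying $(C_1)$ to commutators of $A$ with $\calT$ and $\calF$, the quadratic in $|\nabla U|$ terms are in turn absorbed by the dissipation $-\langle\calD f,f\rangle$ thanks to the upper bound $\eta\leq\overline\eta(1+|\nabla U|)$ that relates the strength of $\calD$ to that of $\calF$.

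Finally, gathering all inequalities yields a bound of the form
\[
-\tfrac{\mathrm{d}}{\mathrm{d}t}H_\alpha(P_tf) \;\geq\; (\underline\eta-\alpha c_1)\|(I-\Pi)f\|^2 + \alpha\frac{c_P}{1+c_P}\|\Pi f-\pi f\|^2 - \alpha^2 c_2 \|f-\pi f\|^2,
\]
where $c_1,c_2$ collect the constants coming from $\overline\eta, C_1, C_2, c_P, d$. Choosing $\alpha$ small enough to make both coefficients positive and optimising yields a Gr\"onwall inequality $H_\alpha(P_tf)\leq e^{-\kappa t}H_\alpha(f)$ which, combined with the equivalence $H_\alpha(f)\asymp\tfrac12\|f\|^2$, gives the stated rate $\kappa$; tracking the dependencies of $\alpha$ on $\underline\eta,\overline\eta,C_1,C_2,c_P,d$ produces the explicit formula for $1/\kappa$ displayed in the theorem, with the prefactor $4/3$ arising from the equivalence constants.
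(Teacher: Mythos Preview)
The overall DMS framework you describe is the same one the paper uses, and your treatment of the microscopic and macroscopic coercivity is essentially correct (up to a missing factor $m_2=d$ in $(\calT\Pi)^*(\calT\Pi)=m_2\nabla_x^*\nabla_x\Pi$, and the fact that the paper keeps a free scalar $m$ in the resolvent defining the auxiliary operator, eventually set to $m=d\,c_P$, which is what produces the exact expression for $1/\kappa$). The substantive gap is in your treatment of the auxiliary error terms, and in particular the two mechanisms you propose there do not work.

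First, the $\varepsilon$-expansion $\calF=\calF^{(0)}+\varepsilon R$ is not how uniformity in $\varepsilon$ is obtained, and it is hard to make it work: the remainder $R$ involves higher $v$-derivatives with $|\nabla U|$-weights and does not produce a bound of the required form $R_0\,\|\Pi f\|\,\|(I-\Pi)f\|$ uniformly in $\varepsilon$. The paper instead exploits that $A^*f=-v\cdot\nabla_x u$ is \emph{linear in $v$} for a function $u$ of $x$ only, computes $\calL^*A^*f$ directly, and finds that the $\calF^*$-contribution collapses to $-(\nabla U\cdot\nabla_x u)\,H(x,v)$ with
\[
H(x,v)=\frac{2}{1+\varepsilon^2}\,\E\big[(\varepsilon\,v\cdot T+G)_-^2\big]\ \leqslant\ 4|v|^2+4
\]
uniformly in $\varepsilon$. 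No expansion in $\varepsilon$ is needed; the uniformity comes from this pointwise bound on $H$.

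Second, your use of the upper bound $\eta\leq\overline\eta(1+|\nabla U|)$ is backwards: an upper bound on $\eta$ cannot strengthen the good term $-\langle\calD f,f\rangle$ so as to absorb $|\nabla U|^2$-weighted errors. In the paper this upper bound is used to \emph{control} the extra error $\eta\,v\cdot\nabla_x u$ that $\calD^*$ contributes to $\calL^*A^*f$. After the bound on $H$ and this use of $\overline\eta$, all $x$-growth appears only through $|\nabla_x^2 u|$ and $\sqrt{1+|\nabla U|^2}\,|\nabla_x u|$; these are then bounded in $L^2(\nu)$ by elliptic regularity for $(mI+m_2\nabla_x^*\nabla_x)^{-1}$ (the estimates of \cite[Prop.~33, Cor.~35]{Andrieu}), which is where $C_1$ and $C_2$ enter. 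You do not invoke these regularity estimates, and without them the auxiliary bound cannot be closed nor can the explicit $\kappa$ be recovered.
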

\begin{Rem} The main point here is that $\kappa$ does not depend on $\varepsilon$. Also note that, as a function of $\eta$, the convergence rate scales for large $\eta$ as $\underline{\eta}/\max(1, \overline{\eta}^2)$, which is well-known for the Langevin dynamics with a constant $\eta$ and suggests that the constant remains finite in the overdamped regime under proper rescaling (albeit with a sub-optimal constant of order $c_P^3$ instead of $c_P$).  For $c_P \ll 1$ and $d \gg 1$ we obtain
\[
\frac{1}{\kappa} \ \sim\  \frac{3}{c_P^3}\p{2c_P+C_1}\p{1+4 C_2 } \frac{1}{\underline{\eta}}\p{\overline{\eta}/ \sqrt{d} + 5}^2 .
\]
Alternatively, if $U$ is $\rho$-convex for some $\rho>0$ independent from the dimension (so that $C_1 = 0$ and $c_P=\rho$), choosing a constant $\eta = \sqrt d$, we get $\kappa = \mathcal O(C_2/\sqrt d)$. For instance, for a standard $d$-dimensional Gaussian target, $C_2=d$.
\end{Rem}
Denote $\mathcal M^*$ the dual of an operator $\mathcal M$ in $L^2(\pi)$, $\calS = (\calL+\calL^*)/2$ and $\calA=(\calL-\calL^*)/2$ the symmetric and skew symmetric parts of $\calL$ and
\[\Pi_v f(x,v) \ =\ \int f(x,v')\gamma(\d v')\,.\]
 The Dolbeault-Mouhot-Schmeiser method \cite{DMS} relies on the modified norm
\[\textbf{H}(f) =\frac12\| f\|^2 + \delta \bracket{\calB f,f}\,,\]
where $\calB$ is defined by
\[\calB   \ = \ -\p{m I + \p{\calA \Pi_v}^* \calA \Pi_v }^{-1} \p{\calA \Pi_v}^*\,,\]
for some scalar parameters $\delta,m>0$  to be chosen later on. From \cite[Proposition 26-(d)]{Andrieu} (applied to the operator $-\calA \Pi_v/\sqrt m$) , $\|\calB\|\leqslant 1/\sqrt m$ so that $\textbf{H}$ is equivalent to the $L^2(\pi)$ norm for $\delta<\sqrt m /2$. The aim is thus to prove that $\textbf{H}$ decays exponentially fast along the semi-group $(P_t)_{t\geqslant 0}$, which proves the hypocoercive decay in $L^2(\pi)$ (in the sense of \cite{Villani}, that is: exponential decay up to a constant factor $C>1$). Formally, the general result is the following:

\begin{The}\label{TheDMS}
Assume that 
\[\calS\Pi_v = 0\,\qquad \Pi_v \calA \Pi_v = 0\]
and that there exist $c_v,R(m)=R>0$ and $c_x(m)=c_x \in(0,1]$ such that, for all nice $f\in L^2(\pi)$ with $\pi f = 0$, it holds:
\begin{align}
&\text{(microscopic coercivity)}
&\bracket{\calS f,f} & \leqslant  - c_v  \|(I-\Pi_v) f\|^2 \label{EqMicroCoer}\\
&\text{(macroscopic coercivity)}
&\bracket{\calB\calA\Pi_v f,f} & \leqslant  -c_x \|\Pi_v f\|^2\label{EqMacroCoer}\\
&\text{(auxiliary bound)}
&\bracket{\calB\calL (1-\Pi_v)f,f}  & \leqslant  R\|\Pi_v f\| \|(I-\Pi_v)f\|\,. \label{EqAux}
\end{align}
Then, for all $f\in L^2(\pi)$ and all $t\geqslant 0$,
\[ \|(P_t -\pi)f\|^2 \ \leqslant \ \frac43 e^{-\kappa t }\|(I-\pi) f\|^2\,,\]
where 
\[ \kappa  \ = \ c_x \inf_{m > 0} \min\p{\frac{\sqrt m}{6},\frac{2c_v}{6+3R^ 2/c_x} }\,. \]
\end{The}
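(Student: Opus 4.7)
The strategy is to prove exponential decay of the Lyapunov functional $\textbf{H}$ along the semigroup, and then transfer this decay back to the $L^2(\pi)$ norm via the equivalence $\textbf{H}(g)\asymp\|g\|^2$ valid when $\delta<\sqrt m/2$ (using $\|\calB\|\leqslant 1/\sqrt m$, as recalled before the statement). Fix $f\in L^2(\pi)$ and set $g(t):=P_t f - \pi f$, so that $\pi g(t)=0$ for all $t\geqslant 0$ and $\partial_t g=\calL g$. The starting point is the identity
\[
-\frac{d}{dt}\textbf{H}(g)\;=\;-\bracket{\calS g,g}\;-\;\delta\bracket{\calB\calL g,g}\;-\;\delta\bracket{\calB g,\calL g},
\]
and the goal is to bound the right-hand side below by a positive multiple of $\|g\|^2$.

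The first term is handled directly by the microscopic coercivity \eqref{EqMicroCoer}, since $\pi g=0$: $-\bracket{\calS g,g}\geqslant c_v\|(I-\Pi_v)g\|^2$. For $\bracket{\calB\calL g,g}$, I would use the splitting $\calL g=\calA\Pi_v g+\calL(I-\Pi_v)g$ (valid because $\calS\Pi_v=0$) combined with \eqref{EqMacroCoer} and \eqref{EqAux} to get
\[
-\bracket{\calB\calL g,g}\;\geqslant\; c_x\|\Pi_v g\|^2-R\,\|\Pi_v g\|\,\|(I-\Pi_v)g\|.
\]
The third term $-\bracket{\calB g,\calL g}$ is the delicate one and is not directly covered by the listed assumptions. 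Because $(\calA\Pi_v)^{*}=-\Pi_v\calA$, the range of $\calB$ lies in the image of $\Pi_v$, so $\calB g=\Pi_v\calB g$; combined with $\Pi_v\calA\Pi_v=0$ this kills the $\calA\Pi_v g$ contribution in $\bracket{\calB g,\calL g}$, reducing it to $\bracket{\calB g,\calL(I-\Pi_v)g}$. The latter is controlled via Cauchy--Schwarz together with the algebraic identity $\calA\Pi_v\calB=m(mI+\calA\Pi_v(\calA\Pi_v)^{*})^{-1}-I$ obtained from the defining resolvent relation, which yields $\|\calA\Pi_v\calB\|\leqslant 2$; passing to the adjoint produces a bound of the form $|\bracket{\calB g,\calL g}|\leqslant R'\,\|\Pi_v g\|\,\|(I-\Pi_v)g\|$ for some $R'$ of the same order as $R$, so that up to redefining $R$ the second and third terms can be lumped together.

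Assembling these lower bounds and applying Young's inequality with weight of order $R^2/c_x$ to absorb every cross term, I would obtain
\[
-\frac{d}{dt}\textbf{H}(g)\;\geqslant\;\Bigl(c_v-\tfrac{\delta R^2}{2c_x}\Bigr)\|(I-\Pi_v)g\|^2\;+\;\tfrac{\delta c_x}{2}\|\Pi_v g\|^2.
\]
Choosing $\delta\leqslant 2 c_v c_x/R^2$ keeps the first coefficient nonnegative, and $\delta\leqslant\sqrt m/2$ preserves the two-sided equivalence $\textbf{H}(g)\asymp\|g\|^2$ with explicit constants. The right-hand side is then bounded below by a positive multiple of $\|g\|^2$, hence of $\textbf{H}(g)$; Grönwall together with the norm equivalence propagates this to $\|g(t)\|^2\leqslant\tfrac{4}{3}e^{-\kappa t}\|g(0)\|^2$, and the optimization over $m>0$ of the product of $\delta c_x$ against the equivalence ratio reproduces the explicit formula for $\kappa$. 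The main obstacle will indeed be the $\bracket{\calB g,\calL g}$ term: none of the three assumptions applies to it directly, and it is only by combining the explicit form of $\calB$ with both orthogonality hypotheses $\calS\Pi_v=0$ and $\Pi_v\calA\Pi_v=0$ that one can reduce it to an auxiliary-bound-type estimate.
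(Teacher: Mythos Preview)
Your overall strategy matches the paper's: differentiate $\textbf{H}$, split into three terms, use the three hypotheses for $\bracket{\calS g,g}$ and $\bracket{\calB\calL g,g}$, and close with Gr\"onwall. You are also right that the delicate piece is $\bracket{\calB g,\calL g}$, and your reduction to $\bracket{\calB g,\calA(I-\Pi_v)g}$ via $\Pi_v\calB=\calB$, $\calS\Pi_v=0$ and $\Pi_v\calA\Pi_v=0$ is correct.

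The gap is in what you conclude from that reduction. Your resolvent identity indeed gives $\|\calA\Pi_v\calB\|\leqslant 1$ (the spectrum of $m(mI+TT^*)^{-1}-I$ lies in $(-1,0]$, so the bound is $1$, not $2$). But applying Cauchy--Schwarz then yields only
\[
|\bracket{\calB g,\calA(I-\Pi_v)g}| \;=\; |\bracket{\calA\Pi_v\calB g,(I-\Pi_v)g}| \;\leqslant\; \|g\|\,\|(I-\Pi_v)g\|,
\]
not the cross term $R'\|\Pi_v g\|\,\|(I-\Pi_v)g\|$ you claim. No $R$ appears anywhere in this computation, so ``$R'$ of the same order as $R$'' is unfounded, and ``redefining $R$'' would alter the constant in the statement---you would obtain \emph{an} exponential rate but not the announced $\kappa$.

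What is missing is a second consequence of $\Pi_v\calA\Pi_v=0$: since $(\calA\Pi_v)^*=-\Pi_v\calA$, one also has $\calB\Pi_v=0$, i.e.\ $\calB=\calB(I-\Pi_v)$. Feeding this into the previous display gives the clean estimate
\[
\bracket{\calB g,\calL g} \;\leqslant\; \|(I-\Pi_v)g\|^2,
\]
which is the bound the paper invokes (citing \cite[Lemma~5]{Andrieu}). This term then combines with the microscopic coercivity to produce the coefficient $(\delta-c_v)$ in front of $\|(I-\Pi_v)g\|^2$, and after Young's inequality on the single genuine cross term $\delta R\|\Pi_v g\|\,\|(I-\Pi_v)g\|$ one chooses $\delta\leqslant\min\bigl(\sqrt m/4,\,c_v/(2+R^2/c_x)\bigr)$ to obtain exactly the stated rate. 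Your alternative constraint $\delta\leqslant 2c_vc_x/R^2$ arises precisely because you dropped this $+\delta\|(I-\Pi_v)g\|^2$ contribution; moreover, with the first coefficient merely nonnegative rather than strictly positive, the right-hand side is not bounded below by a multiple of $\|g\|^2$, so the final Gr\"onwall step would not close as written.
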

\begin{Rem} Typically, the macroscopic coercivity amounts to a spectral gap of the operator $ \p{\calA \Pi_v}^* \calA \Pi_v$ restricted to functions of space variables. In that case (which indeed occurs for our PDMP), one has
\[
c_x = \frac{c}{m+c},
\]
where $c$ is the spectral gap of $\p{\calA \Pi_v}^* \calA \Pi_v$. Then one can choose $m=c$ to get
\[
\kappa  \ = \ \min\p{\frac{\sqrt c}{12},\frac{c_v}{6+6 R^ 2 } } . 
\]
\end{Rem}
\begin{proof}
We only recall the main steps and refer to \cite{DMS,Andrieu} for details. Denoting $f_t = P_t f - \int f\d \pi$, from $\partial_t f_t =\calL f_t$ we get that
\begin{align*}
\partial_t \textbf{H}(f_t) & =  \bracket{f_t,\calL f_t} + \delta \bracket{\calB f_t,\calL f_t} + \delta\bracket{\calB\calL f_t,f_t}  \,.
\end{align*}
The microscopic coercivity condition \eqref{EqMicroCoer} intervenes in the first term
\[
\bracket{f,\calL f} \ = \ \bracket{f,\calS f} \ \leqslant \ -c_v \| (I-\Pi_v) f\|^2\,.
\]
Under the condition $\Pi_v \calA \Pi_v = 0$, the second term is bounded as
\[\bracket{\calB f,\calL f} \ \leqslant \ \|(I-\Pi_v) f\|^2\,,\]
see \cite[Lemma 5]{Andrieu}. From the macroscopic coercivity and auxiliary bounds conditions \eqref{EqMacroCoer} and \eqref{EqAux},
the third term gives
\[\bracket{\calB\calL f,f} \ =\ \bracket{\calB\calL \Pi_v f,f}   + \bracket{\calB\calL (1-\Pi_v)f,f}  \ \leqslant \ -c_x\|\Pi_v f\|^2 + R \|(I-\Pi_v)f\|\| \Pi_v f\|\,,\]
where we used that $\calS\Pi_v = 0$. Denoting $\alpha = \|\Pi_v f_t\|^2/\|f_t\|^2\in[0,1]$, we have thus obtained
\begin{align*}
\frac{\partial_t \textbf{H}(f_t)}{\|f_t\|^2} & \leqslant  (\delta-c_v)(1-\alpha) - c_x\delta \alpha + \delta R \sqrt{\alpha(1-\alpha)} \\
& \leqslant  \p{ \delta\p{1+\frac{R^2}{2c_x}}-c_v}(1-\alpha) - \frac 12 c_x\delta \alpha\,. 
\end{align*}
In particular, if $\delta \leqslant c_v /(2+R^2/ c_x)$, we get that 
\[\frac{\partial_t \textbf{H}(f_t)}{\|f_t\|^2} \ \leqslant \  - \frac12 c_v (1-\alpha) - \frac 12 c_x\delta \alpha \ \leqslant \   - \frac 12 c_x \delta \]
for all $\alpha\in[0,1]$, where we used that $c_x\leqslant 1$ and $\delta\leqslant c_v$. If moreover $\delta \leqslant \sqrt m /4$ we get that $\|f\|^2 \leqslant 4\textbf{H}(f) \leqslant 3 \|f\|^2$ and
\[\partial_t \textbf{H}(f_t) \ \leqslant \ - \frac 12 c_x \delta  \|f_t\|^2 \ \leqslant \ -\frac 23 c_x \delta \textbf{H}(f_t)\,.\]
We may then apply Gronwall's Lemma to conclude: for all $\delta\leqslant \min (\sqrt m /4,c_v /(2+R^2/ c_x))$,
\[ \|f_t\|^2 \ \leqslant \  4\textbf{H}(f_t) \ \leqslant \ 4 e^{-2c_x\delta t/3}\textbf{H}(f_0) \ \leqslant \ \frac43 e^{-2c_x\delta t/3}\\|f_0\|^2\,. \qedhere\]
\end{proof}

We now have to check that the conditions of Theorem \ref{TheDMS} are met under Assumption~\ref{AssumHypoco}. This is usually done by computing explicitly $\calS$ and $\calA$ for particular processes. In fact we will only need the following information, which is obtained from the   condition \eqref{eq:F*}, satisfied by all usual kinetic samplers:

\begin{Lem}
Under Assumption \ref{AssumHypoco}, 
\begin{eqnarray}\label{EqHypocoe}
\calS\Pi_v=0\,,\qquad \calA \Pi_v &=& \calT\Pi_v\,,\qquad \Pi_v \calA\Pi_v = 0\,.
\end{eqnarray}
\end{Lem}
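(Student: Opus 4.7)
All three identities will follow from a single computation: the action of $\calL^*$ on functions $g=g(x)$ that depend only on the position variable, since $\Pi_v h \in L^2(\nu)$ is exactly such a function.

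First, observe that for $g=g(x)$ we have $\calL g = \calT g = v\cdot\nabla_x g$. Indeed, $\calF g \equiv 0$ because the jump kernel only modifies the velocity, so $g(x,v')-g(x,v)=0$ inside the integral defining $\calF$; and $\calD g\equiv 0$ because $\calD=\eta(x)\calD_0$ where $\calD_0$ is a Markov generator on the velocity variable, hence annihilates functions that are constant in $v$.

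Next, I would compute $\langle \phi, \calL^* g\rangle_\pi = \langle \calL\phi,g\rangle_\pi$ piece by piece, for arbitrary test $\phi\in\mathcal C_c^\infty(\R^{2d})$. For the transport part, integration by parts in $x$ (using $\pi(dx\,dv)\propto e^{-U(x)}dx\,\gamma(dv)$) yields
\[
\int \calT\phi \cdot g \,d\pi \;=\; -\int \phi\,(v\cdot\nabla g)\,d\pi \;+\;\int \phi\,g\,(v\cdot\nabla U)\,d\pi.
\]
For the force part, applying Remark~\ref{Rem:F*} (i.e.\ identity \eqref{eq:F*}) pointwise in $x$ and then integrating against $g(x)e^{-U(x)}dx$ yields
\[
\int \calF\phi \cdot g \,d\pi \;=\; -\int \phi\,g\,(v\cdot\nabla U)\,d\pi.
\]
For the dissipative part, condition \eqref{eq:genD} gives $\int \calD\phi(x,v)\gamma(dv)=0$ for every $x$, hence $\int \calD\phi\cdot g\,d\pi=0$. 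Summing, the $g(v\cdot \nabla U)$ contributions cancel, leaving
\[
\calL^* g \;=\; -v\cdot\nabla_x g \;=\; -\calT g.
\]

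The three claimed identities are then immediate: applied to $g=\Pi_v f$, one gets $\calS\Pi_v f = \tfrac12(\calL+\calL^*)\Pi_v f =0$ and $\calA\Pi_v f = \tfrac12(\calL-\calL^*)\Pi_v f = v\cdot\nabla_x \Pi_v f = \calT\Pi_v f$. Finally, since $\gamma$ has zero mean, $\Pi_v(v\cdot\nabla_x \Pi_v f) = \bigl(\int v\,\gamma(dv)\bigr)\cdot\nabla_x\Pi_v f = 0$, which gives $\Pi_v\calA\Pi_v =0$.

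There is no real obstacle: the only ingredient beyond routine integration by parts is the invariance-type identity \eqref{eq:F*}, which was established in Lemma~\ref{lem:invariance} and Remark~\ref{Rem:F*}, together with the dissipativity condition \eqref{eq:genD}. As stressed in the excerpt, the argument is formal in the sense that domain issues and the core property of $\mathcal C_c^\infty$ for $\calL$ and $\calL^*$ are not discussed, but these can be handled exactly as in~\cite{Andrieu}.
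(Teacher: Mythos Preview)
Your proof is correct and follows essentially the same approach as the paper: both use that $\calF\Pi_v=\calD\Pi_v=0$, the identity~\eqref{eq:F*} to handle the adjoint of the force part, integration by parts for $\calT$, and the zero mean of $\gamma$ for the last identity. The only cosmetic difference is packaging: you compute $\calL g$ and $\calL^* g$ directly for $g=g(x)$ and then form $\calS$ and $\calA$, whereas the paper computes $\calT^*$, $\calF^*\Pi_v$, and $\calD^*$ separately before combining; the substance is identical.
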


\begin{proof}
Since $\calF$ and $\calD$ only act on the $v$ variable and $\Pi_v f$ only depends on $x$ for all $f$, $\calD\Pi_v =\calF\Pi_v  = 0$. Moreover, from condition \eqref{eq:F*}, for all  $f,g\in \mathcal C^\infty_c(\R^{2d})$,
\begin{align*}
\bracket{\calF^* \Pi_v f,g} & =  \int_{\R^{2d}} \calF g(x,v) \Pi_v f(x,v) \pi(\d x\d v) \\
& =  \int_{\R^d} \int_{R^d} f(x,w)\gamma(\d w) \int_{\R^d} \calF g(x,v) \gamma(\d v) \nu(\d x) \\
& =  - \int_{\R^{2d}} (v\cdot U(x)) g(x,v) \Pi_v f(x,v) \pi(\d x\d v)\,.
\end{align*}
In other words, $\calF^*\Pi_v f(x,v) =- v\cdot\na U(x) \Pi_v f(x,v)$. Besides,  $\calD^* = \calD$ by assumption and, integrating by parts, $\calT^* f(x,v) = -\calT f(x,v) + v\cdot\na U(x)  f(x,v)$. As a consequence,
\[2\calS\Pi_v  \ = \  (\calT +\calT^* +\calF+\calF^*+\calD+\calD^*)\Pi_v \ = \ 0\]
and
\[2\calA\Pi_v  \ = \  (\calT -\calT^* +\calF-\calF^*+\calD-\calD^*)\Pi_v \ = \ 2\calT\Pi_V\,.\]
Finally, for all $f\in\calC^\infty_c(\R^{2d})$,
\[
\Pi_v \calT\Pi_v f(x,v) \ = \   \int_{\R^d} w\gamma(\d w) \cdot \na_x \int_{\R^d} f(x,w)\gamma(\d w)  \ = \ 0\,.
\qedhere\]
\end{proof}



In particular, the operator $\calB$ being defined from the operator $\calA\Pi_v=\calT\Pi_v$, it is the same in our case and in \cite{Andrieu} (up to the choice of the parameter $m$, which is $m=m_2$ in \cite{Andrieu}). From \cite[Lemma 9]{Andrieu}, $\p{\calA \Pi_v}^* \calA \Pi_v f = m_2 \na_x^* \na_x \Pi_v   f$ and thus
\begin{eqnarray}\label{EqDefu}
\calB^* f &=& -\calT u\,,\qquad\text{where}\qquad u\ = \  \p{m I + m_2\na_x^*\na_x}^{-1} \Pi_v f\,.
\end{eqnarray}
Remark that $u$ is a function of $x$ alone.

\begin{Lem}\label{LemHypoco1}
Under Assumption \ref{AssumHypoco}, the  microscopic and macroscopic coercivity conditions \eqref{EqMicroCoer} and \eqref{EqMacroCoer} respectively hold with $c_v=\underline\eta$ and $c_x = m_2 c_P/(m+m_2 c_P)$. 
\end{Lem}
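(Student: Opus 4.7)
The plan is to establish the two coercivity conditions separately, relying on the identification of the symmetric and antisymmetric parts of $\calL$ already obtained in the previous lemma.

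For \textbf{microscopic coercivity}, I would first unpack $2\calS = \calT + \calT^* + \calF + \calF^* + 2\calD$. An integration by parts yields $\calT + \calT^* = v\cdot\nabla U$ (as a multiplication operator), and $\calD$ is self-adjoint in $L^2(\pi)$ because $\calD_0$ is self-adjoint in $L^2(\gamma)$ and $\pi$ splits as $\nu\otimes\gamma$. The main computational step is then to evaluate $\bracket{\calF f,f}_\pi$. Writing $a = a(x,v,v') = \frac12\nabla U\cdot(v-v')$, so that $a(x,v',v)=-a$, the reversibility of $q_0$ makes the measure $\gamma(\d v)\,q_0(x,v;\d v')$ symmetric in $(v,v')$. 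Using this symmetry together with the identity $\psi(s)-\psi(-s)=s$ (setting $\sigma(a) := \psi(a)+\psi(-a)\geqslant 0$), a $v\leftrightarrow v'$ swap in $\bracket{\calF f,f}_\pi$ leads, after elementary algebra, to
\[
\bracket{\calF f,f}_\pi = -\tfrac14\int \pi(\d x\d v)\,q_0(x,v;\d v')\,\sigma(a)\bigl(f(v)-f(v')\bigr)^2 - \tfrac12\int (v\cdot \nabla U) f^2\,\d\pi,
\]
where the final drift term comes from an application of the averaging condition~\eqref{eq:av}. This drift term exactly cancels the $\frac12(v\cdot \nabla U)$ contribution coming from $\calT+\calT^*$, so
\[
\bracket{\calS f,f}_\pi \ \leqslant\ \bracket{\calD f,f}_\pi \ \leqslant\ -\underline{\eta}\,\|(I-\Pi_v) f\|^2,
\]
where the final bound uses the spectral gap of $\calD_0$ in $L^2(\gamma)$ together with the fibrewise pointwise lower bound $\eta(x)\geqslant \underline\eta$.

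For \textbf{macroscopic coercivity}, I would invoke the identity $\calA\Pi_v = \calT\Pi_v$ from the preceding lemma and the fact $(\calA\Pi_v)^*\calA\Pi_v = m_2\,\nabla_x^*\nabla_x \Pi_v$ recalled in \eqref{EqDefu}. Setting $M := m_2\,\nabla_x^*\nabla_x$ (acting on functions of $x$ alone), we get $\calB\calA\Pi_v = -(mI+M)^{-1}M$, and
\[
\bracket{\calB\calA\Pi_v f,f}_\pi = -\bracket{(mI+M)^{-1}M\,\Pi_v f,\Pi_v f}_\nu.
\]
For $f$ with $\pi f = 0$, the projection $g=\Pi_v f$ is $\nu$-centred, and the Poincar\'e inequality $(c_P)$ says exactly that $M\succeq m_2 c_P\, I$ on the subspace of centred functions. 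By functional calculus (or simply $(mI+M)^{-1}M = I - m(mI+M)^{-1}$), this gives
\[
(mI+M)^{-1}M\,g \ \succeq\ \frac{m_2 c_P}{m+m_2 c_P}\,g,
\]
whence $\bracket{\calB\calA\Pi_v f,f}_\pi \leqslant -\frac{m_2 c_P}{m+m_2 c_P}\,\|\Pi_v f\|^2$, which is the announced bound with $c_x=m_2c_P/(m+m_2c_P)$.

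The main obstacle is really the first calculation: setting up the correct $v\leftrightarrow v'$ symmetrization so that the averaging condition~\eqref{eq:av} produces exactly the drift term needed to cancel the $v\cdot\nabla U$ piece from $\calT+\calT^*$. Once that cancellation is in place, the proof of microscopic coercivity reduces to the spectral gap of $\calD_0$, and macroscopic coercivity is then a routine functional calculus application of Poincar\'e, since the formula for $\calB\calA\Pi_v$ on $\Pi_v$-functions is the same as in the Andrieu--Dolbeault--Mouhot--Schmeiser framework.
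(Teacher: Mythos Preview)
Your proof is correct. For macroscopic coercivity it is essentially identical to the paper's: both write $\calB\calA\Pi_v = -\Phi(M)$ with $M=m_2\nabla_x^*\nabla_x$ and $\Phi(z)=z/(m+z)$, invoke Poincar\'e to get $M\succeq m_2 c_P$ on centred functions, and conclude by monotone functional calculus (the paper calls this ``spectral mapping theorem''). One cosmetic point: your line $(mI+M)^{-1}M\,g \succeq \frac{m_2 c_P}{m+m_2 c_P}\,g$ should be stated as an inequality on quadratic forms, not on vectors.

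For microscopic coercivity the two arguments reach the same inequality $\bracket{\calS f,f}\leqslant\bracket{\calD f,f}$ by different routes. You compute $\bracket{\calF f,f}$ explicitly via a $v\leftrightarrow v'$ symmetrization, obtaining the exact identity
\[
\bracket{\calF f,f}=-\tfrac14\!\int \sigma(a)\bigl(f(v)-f(v')\bigr)^2\,q_0\,\d\pi\;-\;\tfrac12\!\int (v\cdot\nabla U)f^2\,\d\pi,
\]
and then cancel the last term against $\tfrac12\bracket{(\calT+\calT^*)f,f}$. The paper bypasses this computation entirely: it simply observes that $\calT+\calF$ is a Markov generator with invariant measure $\pi$, hence $\bracket{(\calT+\calF)f,f}\leqslant 0$, which is the same as $\bracket{(\calT+\calT^*+\calF+\calF^*)f,f}\leqslant 0$. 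The paper's argument is shorter and uses no structure of $\calF$ beyond $\pi$-invariance; your argument is longer but yields the explicit Dirichlet form of the symmetric part of $\calT+\calF$, which is extra information (and could in principle be exploited to improve $c_v$ if one wanted to use the jump mechanism's own dissipation).
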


\begin{proof}
To get the microscopic coercivity estimate, we remark that $\calT+\calF$ is the generator of a Markov semigroup that fixes $\pi$, so that
\[0 \ \geqslant\  \int f(\calT+\calF)f \pi \ = \ \frac12 \int f(\calT+\calF+\calT^*+\calF^* f)\pi\,, \]
and thus
\begin{eqnarray*}
\bracket{\calS f,f} \ \leqslant \ \bracket{\calD f,f} & = & \int \eta(x) \int f(x,v) \calD_0 f(x,v) \gamma(\d v)\nu(\d x) \\
& \leqslant & -\int \eta(x) \int \p{f(x,v) -\Pi_v f(x,v)}^2 \gamma(\d v)\nu(\d x)  \leqslant \ - \underline\eta \| f-\Pi_v f\|^2\,.
\end{eqnarray*}
For the macroscopic condition, remark that 
\[\calB \calA \Pi_v f = -\Phi\p{\p{\calA \Pi_v}^* \calA \Pi_v   }f \]
with $\Phi(z)  = z/(m+z)$, which is a non-decreasing function from $\R_+$ to $[0,1]$. Moreover, $\p{\calA \Pi_v}^* \calA \Pi_v $ is self-adjoint and for all $f \in L^2(\pi)$ such that $\pi f =0$ (so that $\nu\Pi_v f=0$),
\[\bracket{\p{\calA \Pi_v}^* \calA \Pi_v f ,f} \ = \ m_2 \bracket{\na_x^* \na_x \Pi_v   f, \Pi_v f} \ = \ m_2 \| \na_x \Pi_v  f \|^2  \ \geqslant \ m_2 c_P \|\Pi_v f\|^2\,.\]
From the spectral mapping theorem \cite[Theorem 2.5.1, Corollary 2.5.4]{davies_1995}, $\Phi\p{\p{\calA \Pi_v}^* \calA \Pi_v   }$ is self-adjoint with a spectral gap bounded by $\Phi(m_2c_P)$, which concludes.
\end{proof}

The previous results have been established using only the general condition~\eqref{eq:F*}. 
By contrast, the proof of the auxiliary bound~\eqref{EqAux} is based on the particular form
of~$\calL$.
\begin{Lem}\label{LemHypoco2}
Under Assumption \ref{AssumHypoco}, 
\[   \bracket{\calB\calL (1-\Pi_v)f,f}  \ \leqslant \ R \|\Pi_v f\| \|f-\Pi_v f\|\,,\]
where $R$ is given by 
\[R^2 = \frac1{m_2}\p{\frac1{m_2}+\frac{C_1}{2m  }}\frac{1+4C_2+16c_P^2}{c_P^2} \p{\overline\eta \sqrt{m_2} + 5\sqrt{m_4} + 4}^2  \,. \]
\end{Lem}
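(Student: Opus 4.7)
The starting point is the adjoint identity
\[
\bracket{\calB\calL(1-\Pi_v)f, f} \ = \ \bracket{\calL(1-\Pi_v)f, \calB^* f} \ = \ -\bracket{\calL(1-\Pi_v)f, \calT u},
\]
where $u$ is the function of $x$ alone defined in~\eqref{EqDefu}. Writing $g := (1-\Pi_v)f$ and splitting $\calL = \calT + \calF + \calD$, the task reduces to bounding each of the three inner products $\bracket{\calT g, \calT u}$, $\bracket{\calF g, \calT u}$ and $\bracket{\calD g, \calT u}$ by a multiple of $\|\Pi_v f\|\,\|g\|$, then controlling the relevant norms of $u$ by $\|\Pi_v f\|$ via elliptic regularity on the defining equation $m u + m_2\na_x^*\na_x u = \Pi_v f$.

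The transport and dissipative terms can be handled by direct computation. For $\bracket{\calT g, \calT u}$, one moves a $v$-derivative onto $u$ via integration by parts against $\pi$, producing contributions involving $\na_x^2 u$ and $\na_x U\cdot \na_x u$; the latter is controlled by $\Delta U \leq C_2 + |\na U|^2/2$, and the Gaussian moments $\sqrt{m_2}$, $\sqrt{m_4}$ appear naturally from the $v$-dependence of $\calT u$. For $\bracket{\calD g, \calT u}$, the assumption $\calD_0(v) = -v$ yields $\calD(\calT u) = -\eta(x)\, v\cdot \na_x u$, and one invokes $\eta\leq \overline\eta(1+|\na U|)$ followed by Cauchy--Schwarz in $L^2(\pi)$; this generates the $\overline\eta\sqrt{m_2}$ term appearing inside the square in the expression of $R^2$.

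The main difficulty is the force term $\bracket{\calF g, \calT u}$, since $\calF$ is a non-local velocity-jump operator and the bound must be uniform in the time-step parameter $\eps$. The strategy is to exploit the explicit form of $q$ from Theorem~\ref{Theo:corrected-jump}: for the jump $v' = v - \tfrac{2\eps}{1+\eps^2}(\eps v\cdot T + G) T$, one has
\[
(\calT u)(x,v') - (\calT u)(x,v) \ = \ -\tfrac{2\eps}{1+\eps^2}(\eps v\cdot T + G)\,T\cdot\na_x u(x),
\]
and averaging this difference against the positive-part factor $(\eps v\cdot T + G)_+$ present in $q$ yields, after simplification and computation of the moments of $(\eps v\cdot T + G)_+^k$ for $k\leq 2$, a pointwise bound of the form $|\calF^*(\calT u)|(x,v) \leq C\,|\na U(x)|\,|\na_x u(x)|\,P(|v|)$ for some explicit polynomial $P$ of degree at most two, with constants that do not depend on $\eps$. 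The polynomial $P$ contributes the numerical combination $5\sqrt{m_4} + 4$ in $R$ once its moments under $\gamma$ are computed, and Cauchy--Schwarz then produces a bound of the form $C\,\||\na U|\,|\na_x u|\|_{L^2(\nu)}\,\|g\|$.

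To close the argument, testing the equation $m u + m_2\na_x^*\na_x u = \Pi_v f$ against $u$ in $L^2(\nu)$ gives the basic estimates $\|u\|\leq \|\Pi_v f\|/m$ and $\|\na_x u\|\leq \|\Pi_v f\|/\sqrt{m\,m_2}$, while a Bochner-type identity exploiting the lower bound $\na^2 U \succeq -C_1 I$ controls the weighted $L^2$-norm of $\na_x^2 u$, up to the factor $1 + C_1/(2m)$ that accounts for that term in $R^2$. Inserting these elliptic bounds into the transport, dissipative and force estimates above, and using the Poincar\'e inequality with constant $c_P$ to pass from norms of $\na_x u$ (with weights involving $|\na U|$, hence $C_2$) to those of $\Pi_v f$, one collects the constants and obtains the announced expression for $R$. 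The main obstacle is the force term: it is the only place where the specific velocity-jump structure of Lemma~\ref{lem:velo} intervenes, and one must verify by an explicit integration with respect to the jump Gaussian $G$ that the bound is uniform in $\eps$ in order to match the $\eps$-free hypocoercive rate of Theorem~\ref{ThmHypoco}.
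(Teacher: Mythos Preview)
Your strategy is essentially the paper's: write $\calB^* f=-\calT u$, compute $\calL^*(\calT u)$ explicitly, bound it pointwise, and close with elliptic regularity for $u$. The differences are organizational, and a couple of your intermediate descriptions are off.

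\medskip
\textbf{Grouping $\calT+\calF$.} The paper does \emph{not} split $\calT$, $\calF$, $\calD$ separately. It applies Cauchy--Schwarz once to get $\|(I-\Pi_v)f\|\,\|\calL^*\calB^* f\|$ and then computes $\calL^*\calB^* f$ pointwise, using that $\calT+\calF$ is itself a $\pi$-invariant Markov generator with $q_0$ reversible, so that
\[
(\calT+\calF)^*\phi \;=\; -v\cdot\nabla_x\phi \;+\; \int\bigl(\phi(v')-\phi(v)\bigr)\,(\nabla U\cdot(v'-v))_+\,q_0(x,v,\d v').
\]
This grouping suppresses the $(v\cdot\nabla U)(v\cdot\nabla_x u)$ term that $\calT^*=-\calT+v\cdot\nabla U$ would produce on its own; if you bound $\calT^*$ and $\calF^*$ separately you over-count and will not recover the stated constant. (Incidentally, $\calT$ contains no $v$-derivative, so ``moving a $v$-derivative onto $u$'' is not what happens, and the inequality $\Delta U\leqslant C_2+|\nabla U|^2/2$ is not used at this step.)

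\medskip
\textbf{Constants.} With this grouping the paper obtains
\[
\calL^*\calB^* f \;=\; -\,v\cdot\nabla_x^2 u\,v \;+\; \eta\,v\cdot\nabla_x u \;-\; (\nabla U\cdot\nabla_x u)\,H(x,v),
\]
where $H$ is exactly the jump integral you describe, bounded (uniformly in $\eps$) by $4|v|^2+4$. Hence the force term contributes $4\sqrt{m_4}+4$; the remaining $\sqrt{m_4}$ in the factor $5\sqrt{m_4}+4$ comes from the Hessian term $v\cdot\nabla_x^2 u\,v$, and the two are merged only after the elliptic estimates, using $\frac{1+4C_2+16c_P^2}{c_P^2}\geqslant 1$.

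\medskip
\textbf{Elliptic regularity.} The paper does not re-derive the bounds on $\|\nabla_x^2 u\|$ and $\|\sqrt{1+|\nabla U|^2}\,\nabla_x u\|$; it quotes them from \cite{Andrieu} (Corollary~35 and Proposition~33), where the constants $C_1$, $C_2$, $c_P$ enter in the precise combination $\frac1{m_2}\bigl(\frac1{m_2}+\frac{C_1}{2m}\bigr)$ and $\frac{1+4C_2+16c_P^2}{c_P^2}$. Your sketch (test against $u$, then a Bochner identity) is the right idea for the Hessian bound, but the weighted-gradient bound is more than ``using Poincar\'e''.
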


\begin{proof}
First, we bound
\[ \bracket{\calB\calL (1-\Pi_v)f,f} \ = \ \bracket{(1-\Pi_v)f,\calL^*\calB^*f} \ \leqslant \ \|(I-\Pi_v)f\| \|\calL^*\calB^* f\|\,. \]
Let $u$ be defined by \eqref{EqDefu}. Using the process definition in Lemma~\ref{lem:velo}, we first remark that since i) $\calT + \calF$ conserves the target distribution $\pi$ and ii) $q_0$ is reversible, one has:
\[
\p{\calT + \calF}^* \ph (x,v) = - v \cdot \na_x \ph + \int \p{ \ph(x,v') - \ph(x,v) } \p{\nabla U \cdot (v'-v) }_+ q_0(x,v,d v').
\]
Using that $\calD_0(v)=-v$ and that $(\na U \cdot T)(\na_x u\cdot T) = \na U\cdot\na_x u$,  
\begin{eqnarray}
\calL^* \calB^* f & = & - \calT^2 u - \na_x u \cdot \calD(v) + \frac12  \na_x u \cdot \int \p{ \na U\cdot (v'-v)}_+ (v'-v)   q_0(x,v;\d v') \notag \\
& = & - v\cdot \na_x^2 u \, v +   \eta v\na_x u - \frac{2}{1+\varepsilon^2} (\na U\cdot\na_x u)\int \p{\varepsilon v\cdot T + w}_-^2 e^{-w^2/2}\frac{\d w}{\sqrt{2\pi}}\notag \\
& =: & - v\cdot \na_x^2 u \, v +  \eta v\na_x u -  (\na U\cdot\na_x u)H(v,x)\notag
\end{eqnarray}
(Recall that $\varepsilon$, hence $H$, can depend on $x$). We bound
\begin{eqnarray*}
H(v)  & \leqslant & \frac{2}{1+\varepsilon^2} \int \p{\varepsilon v\cdot T +   w}_-^2 e^{-w^2/2} \frac{\d w}{\sqrt{2\pi}}\ \leqslant \ 4 |v|^2 + 4 \, . 
\end{eqnarray*}
As a consequence,
\[|\calL^* \calB^* f| \ \leqslant \ |v|^2 |\na_x^2 u| + \overline\eta |v| |\na_x u| \sqrt{1+|\na U|^2}  + \p{4|v|^2 + 4}|\na U||\na_x u|\,, \]
and
\[\|\calL^* \calB^* f\| \ \leqslant \ \sqrt{m_4} \|\na_x^2 u\| +   \p{\overline\eta \sqrt{m_2} + 4\sqrt{m_4} + 4}\| \sqrt{1+|\na U|^2}\na_x u\|\,. \]

 Finally, the following elliptic  regularity estimates are  proven in \cite[Corollary 35 and Proposition 33]{Andrieu}:
\begin{eqnarray*}
\| \na_x^2 u\|^2  & \leqslant & \p{\frac1{m_2^2}+\frac{C_1}{2m m_2 }} \|\Pi_v f\|^2 \\
 \|\sqrt{1+|\na U|^2} \na_x u\|^2 & \leqslant & \p{\frac1{m_2^2}+\frac{C_1}{2m m_2  }}\frac{1+4C_2+16c_P^2}{c_P^2} \|\Pi_v f\|^2 \,,
\end{eqnarray*}
 which concludes using $\frac{1+4C_2+16c_P^2}{c_P^2} \geq 1$.
\end{proof}

We may now conclude the proof. 

\begin{proof}[Proof of Theorem \ref{ThmHypoco}]
  By \eqref{EqHypocoe} and Lemmas \ref{LemHypoco1} and \ref{LemHypoco2}, Theorem \ref{TheDMS} applies with any choice of $m>0$.  
   We take $m=m_2 c_P$, so that $c_x = 1/2$ in Lemma~\ref{LemHypoco1},   $R^2$ given in Lemma~\ref{LemHypoco2} is
  \begin{align*}
  R^2& =  \frac1{m_2^2 c_P^3}\p{c_P+\frac{C_1}{2  }}\p{1+4C_2+16c_P^2} \p{\overline\eta \sqrt{m_2} + 5\sqrt{m_4} + 4}^2
  \end{align*}
  while one has from Theorem \ref{TheDMS} 
\[  \kappa \ = \  \min\p{\frac{\sqrt{m_2c_P}}{12},\frac{\underline{\eta}}{6(1+R^2)}}\,.
\]
Recall $m_2=d$ and $m_4=d(d+2)$. Let us show that the minimum is always given by the second term. Using that $C_1,C_2 \geqslant 0$, we simply bound
\[
\frac{6(1+R^2)}{\underline{\eta}} \ \geqslant \ \frac{6R^2}{\overline{\eta}} \ \geqslant \   \frac{6 \p{1+16c_P^2}\p{\overline\eta / \sqrt{d} + 5}^2}{c_P^2\overline\eta}\,.
\]
Optimizing with respect to $\overline\eta$   we remark that $  (\overline\eta/\sqrt{d} + 5)^2 / \overline{\eta} \geqslant 20/\sqrt{d}   $. Moreover, we always have $(1+16c_P^2)/c_P^2 \geqslant 1/\sqrt{c_P}$, and thus $6(1+R^2)/\underline{\eta} \geqslant 120/\sqrt{dc_P}$. As a conclusion, $\kappa = \underline{\eta}/(6+6R^2)$.
\end{proof}



\section{Simulation of velocity-jump processes}\label{Sec:Simulation} 
\subsection{General strategy}
The practical implementation of our 
velocity jumps processes rely on two assumptions:
\begin{enumerate}[label=\roman*)]
  \item  the gradient $\nabla U(x)$ of the log-density can be computed numerically,
  \item some prior estimates on $\nabla U$ are given, typically its uniform norm or global
    Lipshitz constant.
\end{enumerate}
For the sake of simplicity we only consider the case $\psi(s)=(s)_+$, although the extension to other cases is straightforward.

In order to simulate exactly a velocity jump-process we need some \emph{a priori}
information on the jump rate evolution. 
\begin{Def} Let $\lambda(x,v) = \int_{v'} q(x,v; \d v')$ be the total jump rate of a velocity
 jump process. A function $\bar{\lambda} : \R^d \times \R^d \times
  \R^+ \to \R^+$ is called a \emph{prior rate upper bound} if
\[
 \lambda(x+tv ,v)  \leq  \bar{\lambda}(x,v,t)    \qquad \forall x,v \in \R^d, \, t \in \R^+.
\]
\end{Def}

The simulation of the process is based on increasing the number of
jumps at the price of adding uneffective (also called ghost) jumps.  
The jump times and velocities at those jump times, which determine the whole trajectory,
 are defined by induction.  The simulation of the jumps then follows the algorithmic rules:
\begin{enumerate}[label=(\roman*)]
  \item At time $t$, compute $t+S$ the next jump time so that
  \[
   \int_0^S \bar{\lambda}(X_t,V_t, s) \d s = E,
  \]
where $E$ is independent unit exponentially distributed.
The expression of the prior rate bound $\bar{\lambda}$ shall be sufficiently
simple to compute $S$ cheaply and exactly (up to round-off). 

\item Note that $X_{(t+S)-}= X_t+S V_t$ and $V_{(t+S)-} = V_t$. With probability 
\[
\frac{\lambda(X_t+S V_t,V_t)}{\bar{\lambda}(X_t,V_t,S)}
\]
sample a new  velocity $V_{t+S}$ according to the probability kernel 
$k(X_t+S V_t, V_t;\d v')$; else do not change velocity.
 \end{enumerate}
 
 In the rest of this section, we present how a suitable prior rate upper bound can be established and how to sample according to $k$ in the case of the Gaussian velocity jump samplers introduced in Theorem~\ref{Theo:corrected-jump}.

\subsection{Bounds on the corrected rate}
Consider the jump rate $\lambda$ defined in Theorem~\ref{Theo:corrected-jump}. The bound $ \E[ (a + b G)_+] \leq (a)_+ + b /\sqrt{2 \pi}$ yields
  \[    \lambda(x,v) \leqslant  (v\cdot \na U(x))_+ + \frac{  \abs{\nabla U(x)}}{\sqrt{2\pi} \eps(x)}\,.\]  
  Natural choices for $\varepsilon(x)$ are $\eps(x) = \eps_0 |\nabla U(x)|$ (as this gives a uniform bound on the second part of the jump rate), $\varepsilon(x) = \eps_0$ and $\varepsilon(x) = \eps_0/(1+|\nabla U(x)|)$ (for which, according to the discussion in Section~\ref{Sec:Gaussian-limitHD}, the degenerate Langevin term that appears as the first order error with respect to the Hamiltonian dynamics as $\varepsilon\rightarrow 0$ is then uniformly bounded in $x$). In any of those cases, a prior rate upper bound can be obtained from bounds on $v\cdot \nabla U(x+tv)$ and $|\nabla U(x+t)|$. Such bounds are easily obtained if $\nabla U$ is uniformly bounded by some known constant $L$, or if the the Hessian $H$ of $U$ is globally bounded in the Euclidean matrix norm, i.e. $M := \sup_{x \in \R^d} \|H(x)\|_2 < \infty$, in which case
\[  v\cdot  \nabla U(x + vt)  \leqslant   v\cdot  \nabla U(x)+ M |v|^2 t\,,\qquad |\nabla U(x+t)| \leqslant |\nabla U(x)| + M |v| t\,.\]
Each of the three choices of $\varepsilon$ above yields a bound of the form
\[\lambda(x+tv,v) \leqslant \bar\lambda(x,v,t):=  M |v|^2 (t-t_0(x,v))_+ + a(x,v) + b(x,v) t^k \]
for some  $k\in\{1,2\}$ and $a,b,t_0\geqslant 0$. Remark that, from the properties of the exponential law, then
\[S := \inf\left\{s>0, \int_0^s \bar{\lambda}(X_t,V_t, s) \d s > E\right\}\]
has the same law as $S_1\wedge S_2\wedge S_3$ where, denoting $\bar \lambda_1=M |v|^2 (t-t_0)_+$, $\bar \lambda_2 = a$ and $\bar\lambda_3 = b t^{k}$,
\[S_i := \inf\left\{s>0, \int_0^s \bar{\lambda_i}(X_t,V_t, s) \d s > E_i\right\}\,,\]
for $i=1,2,3$, where $E_1,E_2,E_3$ are independent with unit exponential distribution. Here,
\[S_1 = t_0 + \sqrt{\frac{2E_1}{M|v|^2}}\,,\qquad S_2 = \frac{E_2}{a}\,,\qquad S_3 = \left(\frac{(k+1)E_3}{b}\right) ^{\frac1{k+1}}\,.\]

%
%
%

\subsection{Sampling according to the corrected kernel}
\subsubsection{General strategy}
Consider the process defined in Theorem~\ref{Theo:corrected-jump}.
Then, omitting in the notation the dependency
of $\eps$ and $\ta$ on $x$, 
the velocity after jump is $v - 2\eps( \eps v \cdot \ta +
 \tilde G)\ta/(1+\eps^2)$ where $\tilde G$ is a one-dimensional
random variable with density
\[
  f_m(y) = \frac 1 {\Theta(m) \sqrt{2 \pi}} (m +  y)_+ \exp\left( -y^2/2 \right),
\]
where $m = \eps v\cdot \ta $. We sample $\tilde{G}$ using rejection sampling,
with various proposal distributions, depending on the value of the parameter~$m$.
In order to fix notations, we briefly recall the procedure. 
We look for  a function $g_m$ satisfying the two requirements:
\begin{enumerate}
\item $g_m$ is a probability density from which we know how to sample;
\item there exists $C_m>0$ such that for all $x$, $f_m(x) \leq C_m g_m(x)$
  and the ratio $f_m(x)/(C_m g_m(x))$ is computable. 
\end{enumerate}
The rejection sampling then consists in drawing $Y$ according to $g_m$,
and accepting it with probability $f_m(Y)/(C_m g_m(Y))$, and repeating
until a proposal is accepted. It is well-known that this leads to a sample
distributed according to $f_m$, and that the number of proposals needed is geometrically distributed
with mean~$C_m$. 

\subsubsection{Proposal distributions}
We now list various choices for the proposal distribution with the corresponding
computations; these choices are compared in terms of the expected number of trials
and the CPU time in our implementation below. 

\paragraph{Gamma proposal}
For $m<0$, one can choose  a $\Gamma(2,-m)$ proposal, shifted by $(-m)$:
\[
  g_m(y) =  (y+m)_+ (-m)^2 \exp\left( - (-m)(y+m)\right),
\]
which is the distribution of $(-m)+ (E_1 + E_2)/(-m)$, where $E_1$ and
$E_2$ are standard exponential random variables.  This choice yields
\begin{align*}
  \frac{f_m(y)}{g_m(y)}
  &= \frac{1}{\sqrt{2\pi} m^2\Theta(m)} \exp\left( - y^2/2 - m(y+m) \right) \\
  &= \frac{1}{\sqrt{2\pi} m ^2\Theta(m)} \exp\left( - (y+m)^2/2 - m^2/2\right), 
\end{align*}
which is less than $C_m = \exp(-m^2/2) / (\sqrt{2\pi} m^2\Theta(m))$.
A proposed value $y$ is accepted with probability $\exp(-(y+m)^2/2)$,
and the expected number of trials $C_m \to 1$ for $m\to -\infty$.

\paragraph{Exponential proposal}
Still for  $m<0$, we can use an exponentially distributed proposal, shifted by $(-m)$:
 \[
   g_m(y) = \lambda \exp\left( -\lambda (y+m)\right) \one_{y>-m}.
\]
The choice $\lambda = -m$ leads to simple bounds:
\begin{align*}
  \frac{f_m(y)}{g_m(y)}
  &= \frac{1}{(-m)\Theta(m)\sqrt{2\pi}} (m+y)_+ \exp\left(-y^2/2 - m(y+m)\right)
\end{align*}
is maximized for $y=(-m)+1$, so $f_m(y) \leq C_m g(y)$ where
\begin{align*}
  C_m =  \frac{1}{(-m)\Theta(m)\sqrt{2\pi}}  \exp\left(-1/2 - m^2/2\right).
\end{align*}
The acceptance  probability in $y$ is
\begin{align*}
  \frac{f(y)}{C_m g(y)}
  &=(-m)  (m+y) \exp\left( -y^2/2 - my - m^2 + 1/2 + m^2/2\right)\\
  &=(-m) (m+y) \exp(1/2) \exp\left( - (y+m)^2/2\right).
\end{align*}
The constant  $C_m \sim \exp(-1/2) (-m)$ is unbounded for $m\to -\infty$.
However it behaves better than the Gamma proposal for small values of $\abs{m}$. 

\paragraph{Shifted  Rayleigh proposal}
Consider once more the case $m<0$. In the density $f_m$,  $(m+y)_+$ is then
bounded above by $y\one_{y>-m}$, leading to the bound 
\[
  f_m(y) \leq  \frac 1 {\Theta(m) \sqrt{2 \pi}} \one_{y>-m} y \exp\left( -y^2/2 \right) = C_m g_m(y),
\]
where
\begin{align*}
  C_m     &= \frac{\exp(-m ^2/2)}{\sqrt{2\pi} \Theta(m)}, &
   g_m(y) &= \one_{y>-m} y\exp\PAR{m ^2/2 - y^2/2}
\end{align*}                                                   
It is easily checked that $g_m$ is the distribution of
$\sqrt{m^2 + 2E}$ for $E$ an exponentially distributed random variable. 

From the expansion $\mathbb P(G\geqslant x) \simeq \exp(-x^2/2)(1/x-1/x^3)/\sqrt{2\pi}$ as $x\rightarrow \infty$, we get
the asymptotic behaviour
\[
  C_m  =  m^2 + \underset{m\rightarrow - \infty}o(m^2)
\]
implying that this choice is bad when $\abs{m}$ is large.
On the contrary, $C_m$ converges to the optimal value $1$
when $m$ goes to $0_-$.

\paragraph{Mixture between Rayleigh and Gaussian distribution}
We now turn to the case $m>0$ and bound $(m+y)_+$
from above by $m+y\one_{y>0}$. 
\[
  f_m(y) \leq  \frac 1 {\Theta(m) \sqrt{2 \pi}} (m +  y\one_{y>0}) \exp\left( -y^2/2 \right) := C_m g_m(y) 
\]
where $C_m=(m+1/\sqrt{2\pi})/\Theta(m)$ and  $g_m$ is a probability density.
One easily checks that $g_m$ is the density of the mixture 
\[
  \tilde Y = G\ \one_{U \leqslant m/(m+1/\sqrt{2\pi})} + \sqrt{2E}\  \one_{U > m/(m+1/\sqrt{2\pi})}
\]
where $G$, $E$ and $U$ are independent and respectively distributed
according to the standard Gaussian law, the standard exponential
distribution and the uniform law over $[0,1]$; it is therefore easy to sample. 
The proposal is accepted with probability $ (m+Y)_+/ (m +  Y\one_{Y\geq 0}) $.

The bound 
\[
  \Theta(m) \geq  \mathbb E((m+G)\one_{G\geq 0})
  = \frac m2 + \frac 1 {\sqrt {2 \pi}}
\]
shows that $C_m$ is always less than $2$ and  converges to $1$ when $m$ vanishes. For $m\to\infty$, $\Theta(m)\sim m$ and $C_m \to 1$.  

\paragraph{Gaussian proposal}
If $m>0$, the mode of $f_m$ is $\alpha = (\sqrt{m^2+4} -m)/2$. Let $g_m$ be the density
of the Gaussian random variable $\mathcal{N}(\alpha,1)$. Then
\begin{align*}
  \frac{f(x)}{g(x)}
  &= \frac{1}{\Theta(m)} (m+y)_+ \exp\left( -y^2/2 + (y-\alpha)^2/2\right) \\
  &= \frac{1}{\Theta(m)} (m+y)_+ \exp\left( -\alpha y + \alpha^2/2\right).
\end{align*}
This is maximized for $y+m = 1/\alpha$, leading to the bound
\begin{align*}
  \frac{f_m(x)}{g_m(x)} \leq C_m = \frac{1}{\Theta(m) \alpha} \exp\left( - \alpha^2/2\right).
  \end{align*}
  The algorithm then consists in sampling from $g_m$ and accepting with probability
  \[ \frac{f(x)}{C_m g_m(x)} = \alpha (m+y)_+\exp\left( -\alpha y + \alpha^2 \right). \]

  If $m$ goes to infinity, $\alpha \sim 1/m$, so $C_m\sim m/\Theta(m) \to 1$.
  If $m$ goes to zero, $\alpha$ goes to $1$, and $C_m$ to $\exp(-1/2)\frac{1}{\Theta(0)}
  = \sqrt{2\pi} \exp(-1/2)\approx 1.52$.

\subsubsection{Choice of the proposal}
We compare in Figure~\ref{fig:proposals} the various choices for the proposal distributions, 
both theoretically and empirically. The best method depending on $m$ will of course depend 
on implementation details; the important point is that by choosing an appropriate proposal we 
are able to keep the expected number of samples before acceptance $C_m$ bounded. 
For our implementation we are led to 
the following choices. 

\begin{figure}
\centering
\includegraphics[width=0.45\linewidth]{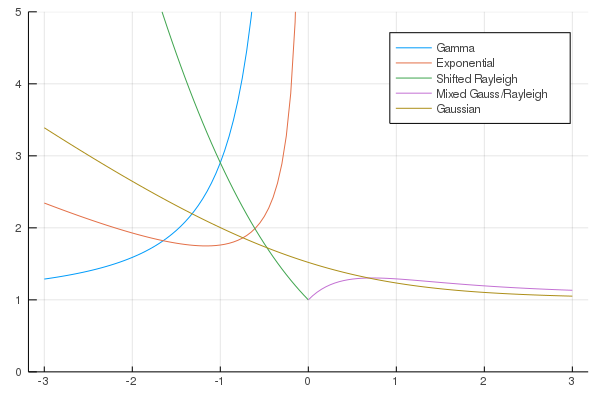}
\includegraphics[width=0.45\linewidth]{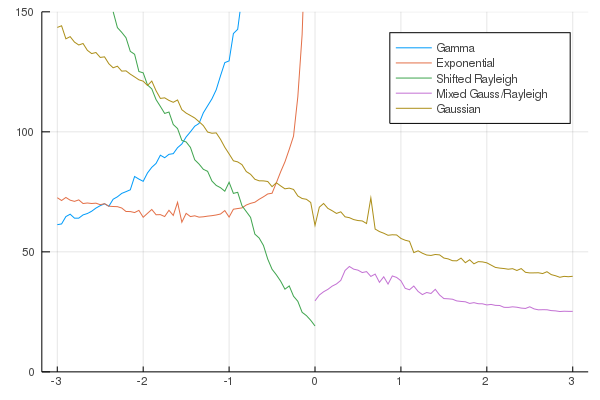}

\scriptsize
On the left, we plot the value of $C_m$, the expected number of samples before acceptance, as a function of $m$, for the five proposal distributions discussed above. On the right we plot the empirical time (in nanoseconds) used by our implementation of the various methods. Note that the Gaussian proposal is in practice, for our implementation, a little slower than its competitors. From both point of views, the minimum of the curves stays uniformly bounded. 
\caption{Comparison of proposal distributions}
\label{fig:proposals}
\end{figure}        
    
     \begin{center}   
        \begin{tabular}{cc}
\toprule
          $m$ & Best proposal \\
          \midrule
          $m\lesssim 2.5$ & Gamma  \\
          $-2.5\lesssim m \lesssim -1$ & Exponential \\
          $-1 \lesssim m \lesssim 0$ & Rayleigh \\
          $0\lesssim m $ & Mixed Rayleigh/Gaussian
          \end{tabular}
\end{center}

\section{Numerical experiments}\label{Sec:numerique}
We provide in this section a numerical illustration for the very simple case of the two dimensional unit Gaussian distribution. We choose the precision parameter to be constant $\varepsilon(x)=\varepsilon$, and the simulated process is the velocity-jump process described in Lemma~\ref{lem:velo}, without any additional noise on velocity.

\paragraph{Motivation}
Although this example may seem {\it a priori} na\"ive, it  is motivated by the practical problem of sampling according to distributions with ``multiscale'' densities in Euclidean space. Indeed, near a local minimum, the potential (log-density) is approximately quadratic, which justifies the choice of the potential. Moreover, the few fastest time scales of the process -- corresponding to stiffest directions of the local minimum -- typically cannot be identified, and may be considered decoupled from: i) other degrees of freedom, and ii) additional noise on velocity which is usually restricted to the slowest time-scale. Those fastest degrees of freedom are the ones we arguably emulate here.

\paragraph{Simulation parameters}
Simulations are carried out with the following parameters:
\begin{itemize}
\item An initial condition $(x_0,v_0) \in \R^4$.
\item A number of force evaluations $n \geq 1$.
\item A quadratic potential of the form:
\[
V(x) = x_1^2/2+ \lambda x_2^2/2
\]
with asymmetry parameter $\lambda  \geq 1$. $\lambda = 1$ corresponds to the potential with (vectorial) isometry symmetry.
\item A constant dynamical precision parameter $\varepsilon(x)=\varepsilon$ (see Lemma~\ref{lem:velo}).
\end{itemize}

\paragraph{Irreducibility issues} Without additional noise (which provides not only ergodicity but also exponentially fast mixing, see Section~\ref{Sec:hypoco}), the simulated velocity-jump process may not be irreducible with respect to the normal distribution (see Section~\ref{Sec:non-irreductible}). In the present section, we will observe the following two cases.
\begin{itemize}
\item The invariant distribution is the unit normal distribution, hence it is invariant by origin preserving isometries. In that case, the process is not irreductible, and it is easy to check that $t \mapsto X_t \wedge V_t$ is constant through time ($x \wedge v =x_1 v_2 - x_2 v_1$ in an orthonormal basis so that $x \wedge v =0$ if and only if $x$ and $v$ are collinear). The process seems to be irreducible with respect to the unit normal $(X,V)$ conditioned by $X \wedge V = x_0 \wedge v_0$ and $X,V \in \Vect(x_0,v_0)$ where $(x_0,v_0)$ are the initial conditions of the process.
\item The invariant distribution is an asymmetric normal distribution, and the process seems to be irreducible in dimension $2$ in that case.
\end{itemize}
Rigorous analysis of irreducibility issues without additional noise is left for future work.

 
\paragraph{Results --- short trajectories}

In Fig.\ref{fig:traj} and~\ref{fig:traj2} we plot short/medium time trajectories for $\lambda =1$ (the unit, symmetric quadratic potential $\abs{x}^2$) and initial condition is $x_0=(1,0)$, $v_0=(1,1)$. Total physical time is (roughly) constant, so that the number of force evaluations increases with the precision parameter $\eps$. We observe that when $\eps \to 0$, trajectories indeed converge to the expected Hamiltonian dynamics of a two dimensional harmonic oscillator (integrated with a Verlet scheme here). 

\begin{figure} 
\centering
\begin{minipage}{0.3\linewidth}
\includegraphics[scale=0.3]{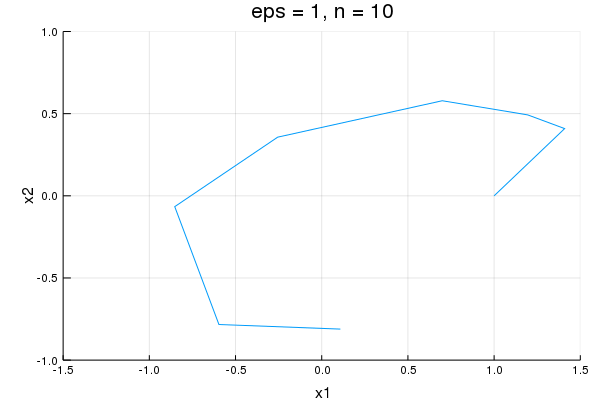}
\end{minipage}
\begin{minipage}{0.3\linewidth}
\includegraphics[scale=0.3]{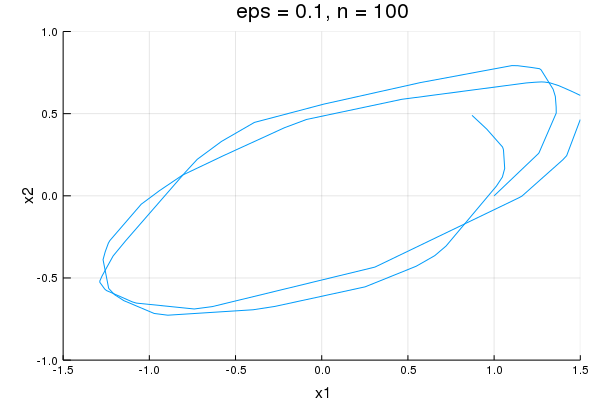}
\end{minipage}
\begin{minipage}{0.3\linewidth}
\includegraphics[scale=0.3]{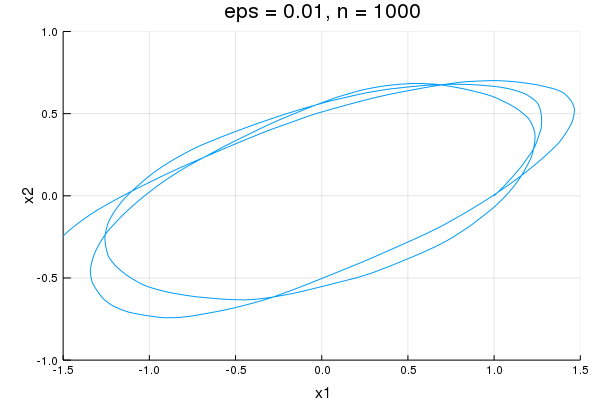}
\end{minipage}

\begin{minipage}{0.3\linewidth}
\includegraphics[scale=0.3]{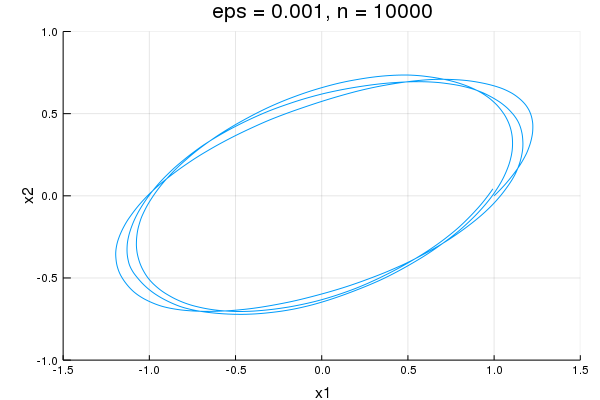}
\end{minipage}
\begin{minipage}{0.3\linewidth}
\includegraphics[scale=0.3]{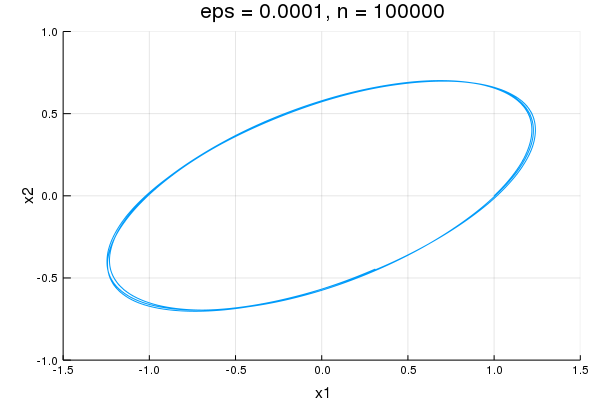}
\end{minipage}
\begin{minipage}{0.3\linewidth}
\includegraphics[scale=0.3]{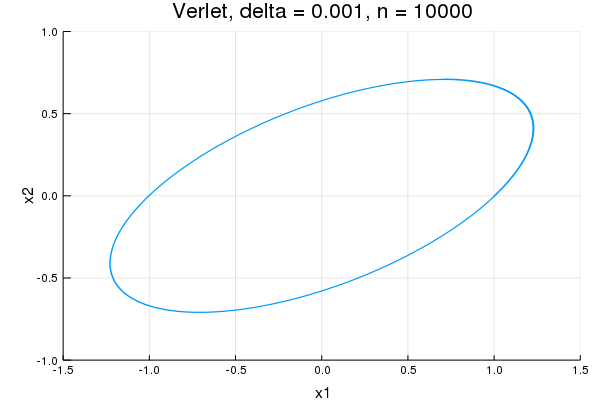}
\end{minipage}
\caption{Examples of trajectories with the same (approximate) time length. The velocity-jump process is compared to the Hamiltonian limit computed with a Verlet scheme. Various $\eps$ are compared.\label{fig:traj}}
\end{figure}

\begin{figure}
\centering
\begin{minipage}{0.4\linewidth}
\includegraphics[scale=0.3]{images/jump00001-100000}
\end{minipage}
\begin{minipage}{0.4\linewidth}
\includegraphics[scale=0.3]{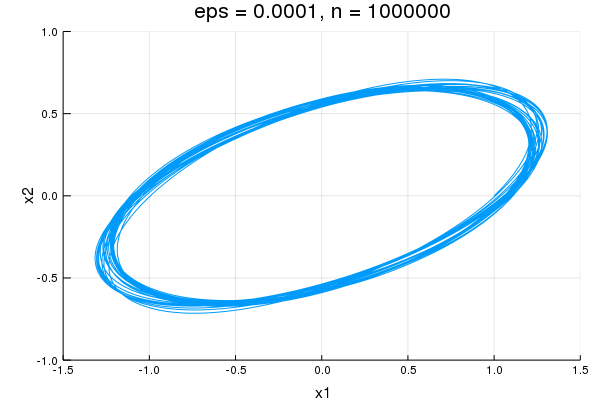}
\end{minipage}
\caption{Same as \ref{fig:traj} but for a longer trajectory \label{fig:traj2}}
\end{figure}

\paragraph{Results --- long non-ergodic trajectories}

In Fig.\ref{fig:traj3} we plot long time trajectories for $\lambda =1$ (the unit, symmetric quadratic potential $\abs{x}^2$), initial condition $x_0 = (0,0.5)$,
$v_0 = (0.5,0)$, and total number of force evaluations $n=10^5$. The expected non-ergodicity is observed.

\begin{figure} 
\centering
\begin{minipage}{0.3\linewidth}
\includegraphics[scale=0.3]{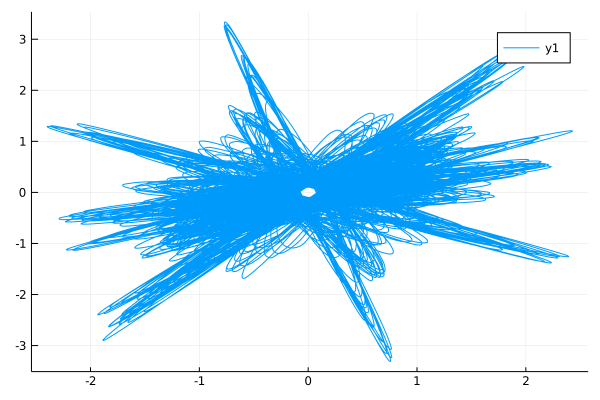}
\end{minipage}
\begin{minipage}{0.3\linewidth}
\includegraphics[scale=0.3]{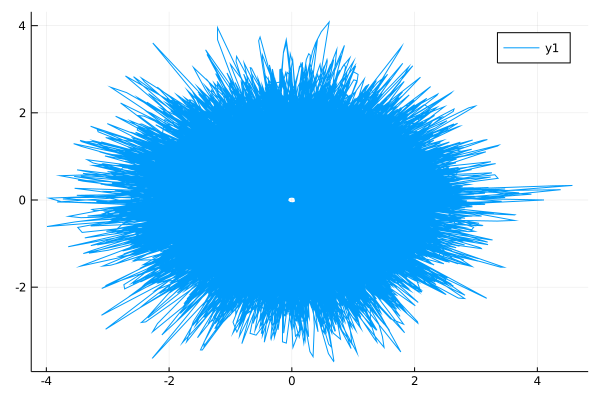}
\end{minipage}
\begin{minipage}{0.3\linewidth}
\includegraphics[scale=0.3]{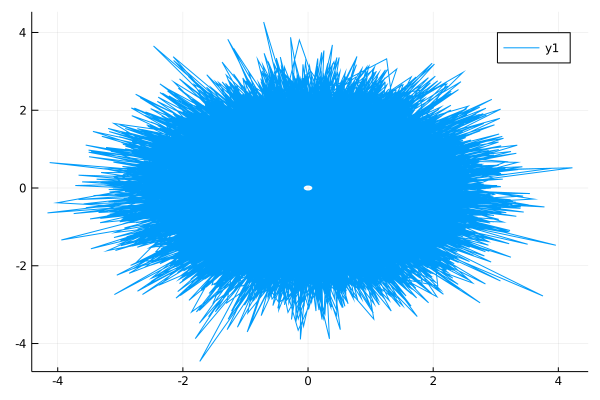}
\end{minipage}

\caption{Examples of long trajectories for the (non-irreducible) unit Gaussian for, from left to right, $\eps \in\set{.01,1,100}$.\label{fig:traj3}}
\end{figure}

\paragraph{Results -- mixing}

In Fig.\ref{fig:mix}, we fix the initial condition $x_0 = (0,0.5)$, $v_0 = (0.5,0)$, and the number of force evaluations $n=10^5$. We consider the position observable given by the time average of the square distance to the origin
$$
\frac1T \int_0^T \abs{X_t}^2 dt .
$$
For this observable, we compare the mixing efficiency for various $\eps  \in \{10^{-2},10^{-1},1,10,10^2\}$ and $\lambda \in \{1, 1.05, 5\}$ using various independent  samples obtained by simulating the velocity-jump process. Let us recall that $\eps =0$ corresponds to the Hamiltonian dynamics, while $\eps = + \infty$ is exactly the bouncy sampler. The figure consists of three (left, right, bottom) groups of box plots of those samples (each corresponding to a value of $\lambda$), the horizontal axis being $\ln \eps$.  

\begin{figure}

\centering

\begin{minipage}{0.4\linewidth}
\includegraphics[scale=0.3]{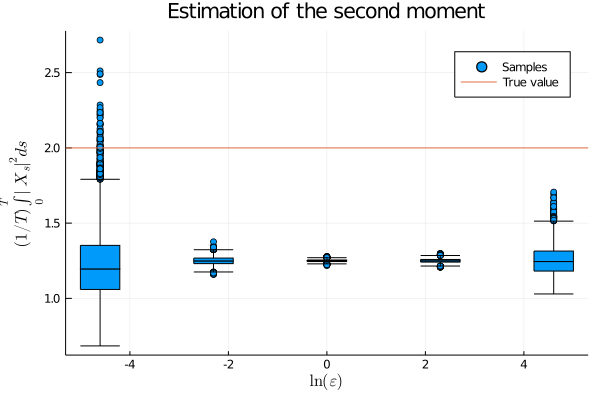}   \\ \centering $\lambda=1$
\end{minipage}

\begin{minipage}{0.4\linewidth}
\includegraphics[scale=0.3]{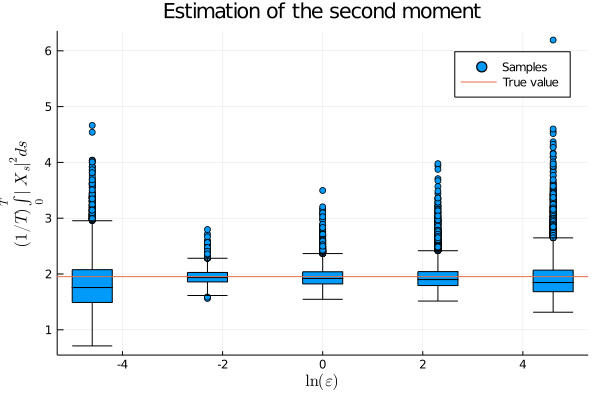} \\ \centering $\lambda=1.05$
\end{minipage}
\begin{minipage}{0.4\linewidth}
\includegraphics[scale=0.3]{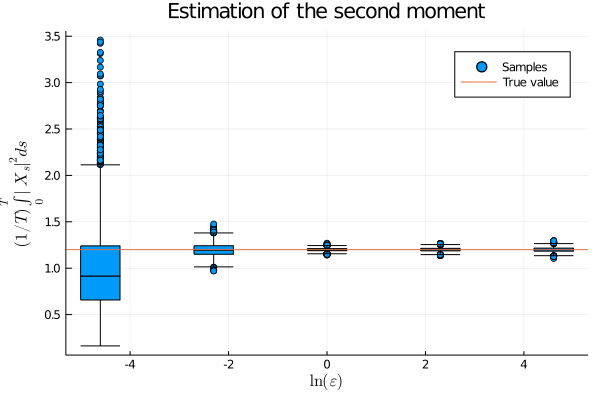}  \\ \centering $\lambda=5$
\end{minipage}

\caption{Box plots of samples obtained with fixed number of force evaluation $n=10^5$. Comparison between:  $\eps \in \{10^{-2},10^{-1},1,10,10^2\}$ on
the horizontal axis, as well as symmetric versus asymmetric potentials --- eigenvalues ratio $1$ (up chart), $1.05$ (left chart) and $5$ (right chart). Observe: i) the bias due to lack of ergodicity in the symmetric case, iii) a decrease of variance in the $\lambda =5$ very asymmetric case, and  iii) an efficiency which seems optimal for various non-extremal values of $\eps$. \label{fig:mix}}

\end{figure}

Several remark and results:
\begin{itemize}
\item As expected, for $\lambda =1$ (and in this case only), the process is not irreducible and the sample is biased. A quick calculation shows that if $(X,V) \in \R^4$ is unit Gaussian then 
$$
\E(|X|^2 | X \wedge V = c) = 1 + c K_1(c) /K_0(c)
$$
where $K$ denotes the modified Bessel special function of the second kind. With our choice of initial conditions, $c =1/4$ and the above quantity is roughly $1.224 $ which is consistent with the observed bias.
\item For $\lambda =1.05$, the proximity to $\lambda=1$ where the breakdown of irreducibility (conservation law) occurs, seems to result in a larger variance than the other cases.
\item For very small value of $\eps$, the process: i) is simulated on comparatively shorter timescale due to the required precision, ii) is closed to an Hamiltonian dynamics which possesses additional conserved quantities (in particular energy). This translates into a poor mixing and thus a larger variance.

\item We observe that the optimal sample quality is obtained for various intermediate values of $\eps$ ( $\eps=1$ or even lower, that is full tangential resampling or closer to the Hamiltonian limit). These intermediate cases seems to consistently outperform the bouncy sampler ($\eps = + \infty$).

\end{itemize}

\paragraph{Conclusion} The process  exhibits irreducibility issues in the presence of radial symmetries that are similar to the ones of the bouncy sampler. A moderate addition of velocity noise is thus recommended in general. The optimal sampling efficiency seems to be obtained for intermediate values of $\eps$, around $1$ or a bit lower (closer to the Hamiltonian limit than the full resampling, but not too much). However, this particular optimal value seems to vary with the target model and requires further and more exhaustive analysis.



\section{Supplementary material}\label{Sec:Supplement}
In this section, we establish a general result on the convergence of a family of Markov processes,
Theorem~\ref{th:conv_gen}, which is used in the proof of Theorem~\ref{th:conv}.

Consider a family $(L_\eps)_{\eps > 0}$ of Markov
generators on $\R^d$, and Markov processes $(X^\eps_t)$ associated to
these generators by a martingale problem. There is a large literature (a reference monograph we will abundantly use here is~\cite{EK}) linking convergence properties of $(L_\epsilon)$ with a
convergence at the level of stochastic processes. Our purpose is to
provide a simple generic setting in which checking the convergence of
$L_\epsilon$ to a limiting generator $L$ \emph{locally} is enough to imply weak convergence at the process level. The classical 'weak' (convergence in distribution) approach of~\cite{EK} relies on characterization of Markov processes by their generator through martingale problems (see below). Applying the convergence of generators at the level of the martingale problem typically enables to obtain tightness of the process distributions, extract a limit from them, and identify it.

In order to state the result, let us briefly recall that 
if $E$ is a Polish state space, the set of \cadlag{} trajectories
indexed by $\R_+$ may be equipped with the Skorokhod topology,
forming a Polish space denoted by $\D_E$
(Section~$5$ and~$6$,
Chapter~$3$ of~\cite{EK}).  We also recall that a sequence of
\cadlag{} trajectories $x^n$ 
converges to $x$ in $\D_E$ if, on any finite time interval, it converges
uniformly up to a uniformly small time change.

\begin{The}\label{th:conv_gen}
  Let $\p{\p{X^\eps_t}_{t \geq 0}}_{\eps>0}$ denotes a family of \cadlag{}
  processes in $\R^d$ with initial distribution $\mu$. Assume the
  following:
\begin{itemize}
\item For each $\eps$, $\p{X^\eps_t}_{t \geq 0}$ solves the martingale
  problem associated with $\p{\mu,L_\eps,C^\infty_c(\R^d)}$.
\item For all $\ph \in C^\infty_c(\R^d)$, $L_\eps \ph$
  converges to $L \ph$ uniformly on compacts.
\item $L \ph$ is continuous and the martingale problem associated
  with $\p{\mu,L,C^\infty_c(\R^d)}$ is well-posed in $\R^d$
  (in particular the solution exists for all time) for any initial
  probability distribution $\mu$.
\end{itemize}
Then $X^\eps$ converges in distribution in the Skorohod space
towards the unique solution of the limiting martingale problem.

\end{The}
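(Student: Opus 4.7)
The strategy is the classical Ethier--Kurtz scheme for convergence of Markov processes characterized by martingale problems (see~\cite{EK}, Chapter 4): first establish relative compactness of the family $\p{X^\eps}_{\eps>0}$ in $\D_{\R^d}$, then identify every subsequential limit as a solution of the martingale problem for $\p{\mu,L,C_c^\infty(\R^d)}$, and finally conclude that the full family converges by the assumed well-posedness of the limit.

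The main obstacle is the compact containment condition needed for tightness: for every $T,\eta>0$, one must find a compact $K \subset \R^d$ with $\inf_\eps \P(X_t^\eps \in K \text{ for all } t \in [0,T]) \geq 1-\eta$. Since $L_\eps \to L$ only locally, there is no direct way to prevent escape to infinity. I would handle this by localization: for $R>0$, introduce $\tau_R^\eps = \inf\set{t\geq 0 : \abs{X_t^\eps} \geq R}$ and work with the stopped processes $X^\eps_{\cdot \wedge \tau_R^\eps}$. For each fixed $R$, these live essentially in a compact set, and for any $\ph \in C_c^\infty(\R^d)$ the martingale decomposition $\ph(X_t^\eps) = \ph(X_0^\eps) + \int_0^t L_\eps \ph(X_s^\eps) \d s + M_t^\eps$ has a drift uniformly bounded on compacts by the uniform convergence $L_\eps \ph \to L \ph$. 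Aldous' criterion then gives tightness of $\ph(X^\eps_{\cdot \wedge \tau_R^\eps})$ in $\D_\R$, and combined with the trivial compact containment of the stopped family, tightness in $\D_{\R^d}$ follows via e.g.~\cite{EK}, Chapter 3, Theorem 9.1.

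Extracting a subsequential weak limit $\tilde X^R$ of the stopped family, the Skorohod representation theorem allows one to pass to the limit, along continuity points $s<t$ of $\tilde X^R$, in the identity
\[
\esp{\PAR{\ph(X_{t\wedge \tau_R^\eps}^\eps) - \ph(X_{s\wedge \tau_R^\eps}^\eps) - \int_{s\wedge \tau_R^\eps}^{t\wedge \tau_R^\eps} L_\eps \ph(X_u^\eps) \d u} H}=0
\]
valid for any bounded continuous functional $H$ of the past up to time $s$; the uniform convergence of $L_\eps \ph$ to $L \ph$ on $\bar B(0,R)$ is precisely what is needed to replace $L_\eps \ph$ by $L \ph$ in the limit. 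Thus $\tilde X^R$ solves the martingale problem for $L$ up to the exit time from $B(0,R)$, and by well-posedness coincides in law with the unique solution $X$ stopped at $\tau_R^X := \inf\set{t : \abs{X_t} \geq R}$. Since $X$ exists for all time (no explosion), $\P(\tau_R^X \leq T) \to 0$ as $R\to\infty$, and this estimate transfers to the $X^\eps$ for $\eps$ small through the convergence of stopped processes, yielding the compact containment condition for the original family. The same martingale-problem identification then applies to the unstopped processes, and uniqueness upgrades subsequential to full convergence of $X^\eps$ to $X$ in $\D_{\R^d}$.
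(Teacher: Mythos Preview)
Your overall strategy---localize by stopping at the exit from large balls, get tightness of the stopped family, identify subsequential limits via the stopped martingale problem, then use non-explosion of the limit to remove the localization---is precisely the route taken in the paper. The paper even cites the same results from~\cite{EK} (Theorem~9.1 of Chapter~3 for tightness, Theorem~6.1 of Chapter~4 for uniqueness of the stopped problem).

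There is, however, a genuine technical gap in your sketch at the identification step and again at the ``transfer'' step. You write that by Skorohod representation one can pass to the limit in
\[
\esp{\PAR{\ph(X^\eps_{t\wedge \tau_R^\eps}) - \ph(X^\eps_{s\wedge \tau_R^\eps}) - \int_{s\wedge \tau_R^\eps}^{t\wedge \tau_R^\eps} L_\eps \ph(X^\eps_u)\,\d u} H}=0,
\]
and later that $\P(\tau_R^X\leq T)\to 0$ ``transfers to the $X^\eps$''. Both moves implicitly use continuity of the map $x\mapsto \tau_R(x)$ on $\D_{\R^d}$, but this map is only lower semicontinuous for the Skorohod topology. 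The paper gives a concrete counterexample (deterministic motion reflected at $\pm(1-1/n)$, with $F=\R\setminus(-1,1)$): here $X^n\to X$ but $\tau(F,X^n)=+\infty$ while $\tau(F,X)<\infty$, so the stopped process $X^n(\cdot\wedge\tau(F,X^n))=X^n$ does \emph{not} converge to $X(\cdot\wedge\tau(F,X))$. Hence one cannot simply pass to the limit in the displayed identity, nor read off $\limsup_\eps \P(\tau_R^\eps\leq T)\leq \P(\tau_R^X\leq T)$ from convergence of the stopped processes alone.

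The paper's fix is to replace the fixed boundary $F=\{|x|\geq R\}$ by a carefully chosen sequence of shrinking neighborhoods $F(\delta_n)$: it proves (its Lemmas preceding the theorem) that for any closed $F$ and any $X^n\Rightarrow X$ one can choose $\delta_n\downarrow 0$ so that $(X^n,\tau(F(\delta_n),X^n))\Rightarrow (X,\tau(F,X))$ jointly. With this joint convergence in hand, both the identification of the limit as a solution of the stopped martingale problem and the transfer of the non-explosion bound go through. Your argument becomes correct once you insert this device; as written, the two steps flagged above are not justified.
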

\begin{Rem}
  The case of $\R^d$ could be easily generalized to any
locally compact Polish space.
\end{Rem}
\begin{proof}
  \newcommand{\proofstep}[1]{\par\medskip\emph{#1}\hspace{1em}}
  The proof uses heavily the  classical technical
  apparatus developed in~\cite{EK}. Let us first give an outline
  of the strategy before going into details.

  The key point in order to use the ``local'' convergence
  of $L_\epsilon$ to $L$ is to stop the processes when they
  leave large compact sets of $\R^d$, say balls defined by
\[
B_r \eqdef \set{x, \, \abs{x} \leq r},
\]
and  to remark that the family of stopped processes is tight
with respect to the Skorohod topology.
Using the convergence of $L_\eps$ to $L$,  any limit of extracted
$\eps$-sequences is then shown to coincide, when stopped, with
the unique solution of a stopped martingale problem
associated with $L$.
In the last step, stopping the processes outside an appropriate
ball, the global convergence is established.  

Let us now give details on these three steps.

\proofstep{Tightness for stopped processes.}
If $F \subset E$ is
closed, and $x \in \D_E$ we consider the hitting time
\[
  \tau(F) = \tau(F,x) \eqdef
  \inf \set{t \geq 0, \, \abs{x_{t}} \in F \text{\,or\,} x_{t^-} \in F} \in [0,+\infty],
\]
which is a stopping time for the canonical natural filtration of the
Borel sets of $\D_E$.
We fix an $r>0$, let $F=\{x: \abs{x}\geq r\}$ and consider 
$X^{\eps,F}$ the stopped process
\[ X^{\eps,F}(t) = X^\eps(t \wedge  \tau(F)).\]
The goal of this first step is to prove that  $(X^{\eps,F})_{\eps >0}$,
whose trajectories stay in the bounded set $\set{x:\abs{x}\leq r}$, is tight. 
The proof follows a very classical pattern; we sketch
it using~\cite{EK} as reference for the sake of completeness. Details can be found in~\cite{rousset2019weak}, Section~$3.2$.

Using \cite[Theorem~$9.1$, Chapter~$3$, p.$142$]{EK},
tightness is equivalent to the tightness in $\D_\R$ of
$\p{\ph(X^{\eps,F})}_{\eps >0}$ for each $\ph \in
C^\infty_c(\R^d)$. Fix $\ph \in C^\infty_c(\R^d)$. Classically: i)
expand squares of the form
$\p{\ph(X^{\eps,F}_{t+h}) - \ph(X^{\eps,F}_t)}^2$;
ii) consider the
two (stopped) martingales associated with $\ph(X^{\eps,F}_t)$ and
$\ph^2(X^{\eps,F}_t)$; and
iii) use the uniform boundedness on compacts
of $L_\eps \ph$ and $L_\eps \ph^2$ (which follows from the convergence
assumption).
Standard tightness criteria like \cite[Theorem~$8.6$,
Chapter~$4$, p.$137$]{EK} enables to conclude.

\proofstep{Identification of the limit through a stopped martingale problem.}
Let $X^n=X^{\epsilon_n, F}$ be an arbitrary convergent subsequence of $X^{\epsilon,F}$, and call its limit $Y$. Call $X$ a solution of the limit martingale problem; recall that we assume well-posedness so $X$ is unique in distribution. 

We claim that the stopped process $Y^\tau = Y(t\wedge \tau(F))$ solves
a \emph{stopped martingale problem}
(\cite[Section 6, Chapter 4]{EK}): for any $\phi$,
\begin{equation}
  \label{eq:limitMP}
  \phi(Y(t\wedge \tau(F))) - \phi(Y(0))
  - \int_0^{t\wedge \tau(F)} L\phi(Y(s)) ds
\end{equation}
is a martingale with respect to the natural filtration of $Y$. 
By \cite[Theorem~$6.1$ p.~$217$ Ch.$4$]{EK}, there is a unique solution of this
stopped martingale problem, namely the distribution of $X$ stopped at $F$, so that:
\begin{equation}
  \label{eq:identificationOfY}
  Y^\tau  \stackrel{(d)}{=} X^\tau \eqdef X(\cdot \wedge\tau(X,F)).
\end{equation}

Let us now justify the claim.
By Lemma~\ref{lem:choosingNeighborhoods} below, there exists a sequence $\delta_n$
such that, denoting by $F_n$ the $\delta_n$-neighborhood $F(\delta_n)$
of $F$, the sequence  $(X^n,\tau(F_n,X^n))$
converges in distribution towards $\p{Y,\tau(F,Y)}$.
In particular, as can be seen by a Skorohod almost sure representation of the latter convergence, we get the convergence in distribution of $\tilde{X}^n(\cdot) = X^n(\cdot\wedge \tau(F_n,X^n))$:
\[
  (\tilde{X}^n, \tau(F_n, \tilde{X}_n))
  \xrightarrow[(d)]{}
  (Y^\tau, \tau(F,Y)).
\]
Now $\tilde{X}^n$ solves the martingale problem associated to $L_{\epsilon_n}$, stopped at time $\tau(F_n)$: let us briefly see how to send $n$ to infinity and justify the claim.

By \cite[Lemma~$7.7$, Chapter~$3$, p.~$131$]{EK}
there exists a  dense subset of times $\calC \subset \R$ where
the limit $Y^\tau$ is continuous with probability one.
Let $t_1$,... $t_{K+1}$ be arbitrary times in $\calC$ and $\phi$, $\phi_1$,
... $\phi_K$ be bounded test functions. By definition of the stopped
martingale problem solved by $\tilde{X}^n$ and the characterization
of martingales given in~\cite[p.$174$]{EK}, 
\begin{equation}
  \label{eq:altMPn}
  \esp{%
    \p{ \phi(\tilde{X}^n(t_{K+1})) - \phi(\tilde{X}^n(t_K))
      - \int_{t_K\wedge \tau(F_n,\tilde{X}^n)}^{t_{K+1}\wedge\tau(F_n,\tilde{X}^n)} L_{\eps_n} \phi(\tilde{X}^n(s))ds}
    \prod_{k=1}^K \phi_k(\tilde{X}^n(t_k))
  } = 0.
\end{equation}
The left-hand side may be written as $\esp{ \Phi(\tilde{X}^n, \tau(F_n, \tilde{X}^n))}$
for some function $\Phi$. 
Remarking by dominated
convergence that since $L \ph$ is continuous, integrals of the form $x
\mapsto \int_0^t L \ph (x_s) d s $ are continuous with respect to the
Skorokhod topology, and since the $t_k$ are in $\calC$, $\Phi$ is almost
surely continuous at the limit $(Y^\tau,\tau(F,Y))$. This justifies taking the
limit in~\eqref{eq:altMPn}, which yields
\[  
  \esp{%
    \p{ \phi(Y^\tau(t_{K+1})) - \phi(Y^\tau(t_K))
      - \int_{t_K\wedge \tau(F,Y)}^{t_{K+1}\wedge\tau(F,Y)} L\phi(Y^\tau(s))ds
      }
    \prod_{k=1}^K \phi_k(Y^\tau(t_k))
  } = 0.
\]
Using again the previously mentioned characterization of martingales, this
entails that $Y^\tau$ indeed satisfies the martingale problem with generator~$L$,
stopped at time $\tau = \tau(F,Y)$.

\proofstep{Convergence of the original processes.}
We fix a bounded continuous observable $\Psi(x)=\Psi(x_s, 0 \leq s
\leq T)$ on $\D_E$ mesurable with respect to paths restricted to a
given finite time interval $[0,T]$.
Since the limit martingale problem is assumed to be well-posed in $\R^d$, the solution $X$ exists for all time, and thus
for each $\eta>0$ there exists a $r = r(T,\eta)$ such that
denoting $F=\{x, \abs{x}\geq r\}$, 
\[
\P( \tau(F,X) \leq 2T ) \leq \eta . 
\]

Our goal is to prove that
\(  \abs{\E\b{\Psi(X^\eps)} - \E\b{\Psi(X)}} \to 0\), or in other words that
\[
  D \eqdef
\limsup_{\epsilon\to 0}
  \abs{\E\b{\Psi(X^\eps)} - \E\b{\Psi(X)}}
\]
is zero. Let us extract a sequence $X^n = X^{\epsilon_n}$ such that
\(   D = \lim_n
  \abs{\E\b{\Psi(X^n)} - \E\b{\Psi(X)}}
\);
up to extracting a further subsequence we may assume by tightness that
$X^{n,F}$ converges in distribution.
By \eqref{eq:limitMP} and \eqref{eq:identificationOfY} from
the previous step, there exists a sequence $(\delta_n)$ such that,
for $F_n = F(\delta_n)$, 
\begin{equation}
  \label{eq:cvgProcessusArretes}
  (\tilde{X}^n, \tau(F_n,\tilde{X}^n)) \xrightarrow[]{(d)} (X^\tau, \tau(F,X)),
\end{equation}
where $\tilde{X}^n=X^{n,F_n}$ is the process stopped when it reaches $F_n$.
Since $X^n(t)=\tilde{X}^n(t)$ when $t<\tau(F_n,X_n)$,  
\begin{align*}
  \abs{\E\b{\Psi(X^n)} - \E\b{\Psi(X)}}
  &\leq \abs{
    \esp{\Psi(\tilde{X}^n) \one_{\tau(F_n,\tilde{X}^n)  > T}}
     - \esp{\Psi(X^\tau)\one_{\tau(F,X) > T}}
    } \\ 
  & \quad + \norm{\Psi}_\infty \P(\tau(F_n,\tilde{X}^n) \leq T   ) \\
  &\quad  + \norm{\Psi}_\infty \P(\tau(F,X)  \leq T)
\end{align*}
By~\eqref{eq:cvgProcessusArretes}, the first term vanishes in the limit so 
$D \leq 2\norm{\Psi}_\infty \eta$. Since $\eta$ is arbitrary, $D$ must be zero,
concluding the proof of convergence. 
\end{proof}

\newcommand{\prb}[1]{\mathbb{P}\left[#1\right]}

The above proof uses a technical result to 
handle the fact that, for a given closed set $F$,
the map $x\mapsto \tau(F,x)$ is only lower semicontinuous with respect to the Skorokhod topology.
To understand what may go wrong, consider $X^n$ the deterministic motion in $\mathbb{R}$ that goes upwards or downwards at speed one and is reflected on the boundary
of $[-1+1/n, 1-1/n]$: for $F=\mathbb{R}\setminus]-1,1[$, the hitting time of $F$ is infinite
for $X^n$ but finite for the limiting process~$X$; in particular, the stopped
process $X^n(t\wedge \tau(\{-1,1\},X^n)) = X^n(t)$  does \emph{not} converge to $X\p{t\wedge\tau\p{\set{-1,1},X}}$. 

We first prove a deterministic result showing that we may almost recover continuity by
considering $\delta$-neighborhoods of $F$, $F(\delta) = \{x: d(x,F)\leq \delta\}$.  
\begin{Lem}
  \label{lem:sko}
  Suppose that $x^n \to x$ in $\mathbb{D}_E$ and let $F$ be a closed set. Then
  $\delta\mapsto \tau(F(\delta),x)$ is decreasing and
  \begin{align}
    \label{eq:lemSko1}
    \limsup_n \tau(F(\delta),x^n) &\leq \tau(F,x) \leq \liminf_n \tau(F,x^n), \\
    \label{eq:lemSko2}
    \tau(F(\delta),x) &\xrightarrow[\delta\to 0]{} \tau(F,x). 
  \end{align}
  Consequently for any  sequence $\delta_n\to 0, $
  \begin{equation}
    \label{eq:lemSko3}
    \liminf_n  \tau(F(\delta_n), x^n ) \geq  \tau(F,x).
    \end{equation}
\end{Lem}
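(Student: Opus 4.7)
The strategy is to establish the three displayed statements in order, then deduce the consequence. The monotonicity of $\delta \mapsto \tau(F(\delta), x)$ is immediate: $F(\delta) \subseteq F(\delta')$ whenever $\delta \leq \delta'$, so the hitting time of the larger set is smaller. The two inequalities in \eqref{eq:lemSko1} and the limit \eqref{eq:lemSko2} then come from the classical interplay between the Skorokhod topology and hitting times of closed sets, and \eqref{eq:lemSko3} is deduced by combining them.

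For the right-hand inequality of \eqref{eq:lemSko1} (the classical lower semi-continuity of $\tau(F,\cdot)$ for $F$ closed), I would work by contradiction: assume $\liminf_n \tau(F,x^n) = t_\infty < \tau(F,x)$ and extract a subsequence realising this liminf. Using that convergence $x^n \to x$ in $\mathbb{D}_E$ provides, on every bounded interval, time changes $\lambda_n \to \mathrm{id}$ uniformly such that $x^n \circ \lambda_n \to x$ uniformly, and the fact that $F$ is closed, pass to the limit in the relation $x^n(\tau(F,x^n)) \in F$ or $x^n(\tau(F,x^n)^-) \in F$ to conclude that one of $x(t_\infty)$, $x(t_\infty^-)$ belongs to $F$, contradicting the definition of $\tau(F,x)$. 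For the left-hand inequality $\limsup_n \tau(F(\delta),x^n) \leq \tau(F,x)$, set $\tau := \tau(F,x)$ and assume $\tau < \infty$. Since $x$ is \cadlag, at least one of $x(\tau)$, $x(\tau^-)$ belongs to $F$, so by Skorokhod convergence one can produce times $s_n \to \tau$ with $d(x^n(s_n), F) < \delta$ for $n$ large, yielding $\tau(F(\delta), x^n) \leq s_n$ and hence the desired limsup bound.

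To prove \eqref{eq:lemSko2}, monotonicity guarantees that $\tau_0 := \lim_{\delta \to 0} \tau(F(\delta), x)$ exists with $\tau_0 \leq \tau(F,x)$. For the reverse inequality, for each $\delta > 0$ choose $t_\delta$ with $t_\delta \to \tau_0$ and with $x(t_\delta)$ or $x(t_\delta^-)$ within distance $\delta$ of $F$; \cadlag{} regularity of $x$ combined with the closedness of $F$ forces $x(\tau_0) \in F$ or $x(\tau_0^-) \in F$ in the limit, so $\tau(F,x) \leq \tau_0$.

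Finally, \eqref{eq:lemSko3} follows by sandwiching. Fix $\delta > 0$; for $n$ large enough $\delta_n < \delta$, whence $F(\delta_n) \subseteq F(\delta)$ and $\tau(F(\delta_n), x^n) \geq \tau(F(\delta), x^n)$. Applying the right-hand inequality of \eqref{eq:lemSko1} to the closed set $F(\delta)$ gives
\[
\liminf_n \tau(F(\delta_n), x^n) \;\geq\; \liminf_n \tau(F(\delta), x^n) \;\geq\; \tau(F(\delta), x),
\]
and letting $\delta \to 0^+$ together with \eqref{eq:lemSko2} yields the claimed lower bound. The main delicate point throughout is the bookkeeping between $x(t)$ and $x(t^-)$: the Skorokhod topology only controls values up to small time shifts, which is precisely why an enlargement by $\delta$ is needed in the upper bound of \eqref{eq:lemSko1}; the counterexample after the lemma (reflected motion on $[-1+1/n, 1-1/n]$) shows that no such enlargement can be dropped.
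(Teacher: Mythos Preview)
Your proposal is correct and follows essentially the same route as the paper's proof: monotonicity is immediate; the two inequalities in \eqref{eq:lemSko1} come from the interplay between Skorokhod convergence and closedness of $F$ (the paper argues the lower semicontinuity directly via compactness of $\overline{x([0,t])}\subset F^c$ rather than by contradiction, but the content is the same); \eqref{eq:lemSko2} is obtained by passing to the limit in $x(t_\delta)$ or $x(t_\delta^-)\in F(\delta)$ exactly as you outline (the paper just makes the case $\tau_0=\infty$ explicit); and your derivation of \eqref{eq:lemSko3} by sandwiching with a fixed $\delta$ and then letting $\delta\to 0$ is identical to the paper's.
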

Note that we can  only expect to get a statement on the liminf if the
sequence $\delta_n$ is arbitrary: indeed,in the example detailed above, whether
$\limsup_n \tau(F(\delta_n),X^n) \leq \tau(F,X)$
depends on how $\delta_n$ compares to $1/n$. 

\begin{proof}
  If $\tau(F,x)>t$ then the compactified trajectory $\Gamma = \overline{x([0,t])}$ is
  entirely contained in $F^c$; by compactness $\Gamma(\delta)$ is also contained in $F^c$
  for $\delta$ small enough. For
  any $t'<t$, by definition of the Skorokhod topology 
  the compactified trajectory $\Gamma_n=\overline{x^n([0,t'])}$ is included
  in $\Gamma(\delta)$ for $n$ large enough, so $\tau(F,x^n) \geq t'$ for $n$ large enough, proving the
  second inequality in \eqref{eq:lemSko1}.

  Similarly, fixing  $\delta$ and $t>\tau(F,x)$,  
  we get that $x(\tau(F,x)) \in F$ so that for $n$ large enough, $x^n(t_n)\in F(\delta)$ for
  some $t_n \leq t$; in other words $\tau(F(\delta),x^n) \leq t$ for $n$ large enough.
  Therefore $\limsup \tau(F(\delta),x^n) \leq \tau$, completing the proof of~\eqref{eq:lemSko1}
  since $t>\tau(F,x)$ is arbitrary. 

  We now prove \eqref{eq:lemSko2}.
  Clearly if  $F\subset G$ then $\tau(F,x)\geq \tau(G,x)$, so $\delta\mapsto \tau(F(\delta),x)$ decreases.
  Let $\delta_n$ be a sequence decreasing to zero: $\tau_n = \tau(F(\delta_n), x)$ is increasing.
  Let $\tau_\infty$ be its limit; since $\tau_n \leq \tau(F,x)$, $\tau_\infty \leq \tau(F,x)$.
  If $\tau_\infty = \infty$ then $\tau_\infty = \tau(F,x)$. If $\tau_\infty$ is finite, 
  for each $n$ one of  $x(\tau_n)$ or $x((\tau_n)_-)$
  is in $F(\delta_n)$: call it $y_n$. By compactness of $\overline{x([0,\tau_\infty])}$
  $y_n$ must converge; its limit is in $\cap_n F(\delta_n) = F$, and is either $x(\tau_\infty)$
  or $x((\tau_\infty)_-)$, so $\tau(F,x)\leq \tau_\infty$ and once more they are equal.

  Suppose $\delta_n$ converges to $0$. Fix a $\delta>0$. For $n$ large enough,
  $\delta_n \leq \delta$ so 
  $\tau(F(\delta_n),x^n) \geq \tau(F(\delta),x^n)$. Taking limits we get
  \[ \liminf \tau(F(\delta_n), x^n) \geq \liminf \tau(F(\delta),x^n) \geq \tau(F(\delta),x),\]
  using \eqref{eq:lemSko1}. Taking $\delta$ to zero and using
  \eqref{eq:lemSko2} yields Equation~\eqref{eq:lemSko3}.  
\end{proof}

The following probabilistic corollary shows that $\delta_n$ may be
chosen to decay slowly enough so that the hitting times converge. 
\begin{Lem}
  \label{lem:choosingNeighborhoods}
  Suppose that $X^n$ converges in distribution to $X$.
  For any closed set $F$, there exists a sequence of radii
  $(\delta_n)_{n\geq 0}$ such that
  \[
    (X^n, \tau(F(\delta_n), X^n)) \xrightarrow[n\to\infty]{(d)} (X, \tau(F,X)).
   \]
 \end{Lem}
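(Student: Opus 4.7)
The plan is to combine Skorokhod's representation theorem with a diagonal construction, using the two complementary statements of Lemma~\ref{lem:sko} as the only inputs on the behaviour of hitting times under Skorokhod convergence. First I would transport the sequence to a common probability space carrying processes $\tilde X^n, \tilde X$ with $\tilde X^n \stackrel{d}{=} X^n$, $\tilde X \stackrel{d}{=} X$, and $\tilde X^n \to \tilde X$ almost surely in $\D_E$. Since the law of the pair $(X^n, \tau(F(\delta_n), X^n))$ depends only on the law of $X^n$ and on the deterministic data $(F, \delta_n)$, it coincides with that of $(\tilde X^n, \tau(F(\delta_n), \tilde X^n))$; it therefore suffices to exhibit a deterministic sequence $\delta_n \to 0$ for which the latter converges in probability to $(\tilde X, \tau(F, \tilde X))$ on the common space, which is a stronger mode of convergence than the one we need.

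On the common space, Lemma~\ref{lem:sko} gives two facts. For every fixed $k \geq 1$, \eqref{eq:lemSko1} yields $\limsup_n \tau(F(1/k), \tilde X^n) \leq \tau(F, \tilde X)$ almost surely, hence in probability, so that
\[
p_{n,k} \ := \ \P\p{\tau(F(1/k), \tilde X^n) > \tau(F, \tilde X) + 1/k} \xrightarrow[n\to\infty]{} 0.
\]
In parallel, \eqref{eq:lemSko3} gives $\liminf_n \tau(F(\delta_n), \tilde X^n) \geq \tau(F, \tilde X)$ almost surely for any deterministic sequence $\delta_n \to 0$.

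The diagonal step then runs as follows: pick a strictly increasing sequence $(N_k)_{k\geq 1}$ such that $p_{n,k} < 1/k$ for all $n \geq N_k$, and set $\delta_n = 1/k$ for $N_k \leq n < N_{k+1}$. This is deterministic and tends to $0$; by construction $\P(\tau(F(\delta_n), \tilde X^n) > \tau(F, \tilde X) + \delta_n) \to 0$, which upgrades to $\limsup_n \tau(F(\delta_n), \tilde X^n) \leq \tau(F, \tilde X)$ in probability. Combined with the almost sure lower bound from \eqref{eq:lemSko3}, this yields $\tau(F(\delta_n), \tilde X^n) \to \tau(F, \tilde X)$ in probability, and together with the almost sure convergence $\tilde X^n \to \tilde X$ in $\D_E$ one obtains joint convergence in probability, hence in distribution, of $(\tilde X^n, \tau(F(\delta_n), \tilde X^n))$ to $(\tilde X, \tau(F, \tilde X))$, which is the claim transferred back to the original laws.

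The main subtlety is that the quantities $p_{n,k}$ are defined with respect to a particular Skorokhod coupling, so the $(N_k)$ and the resulting $\delta_n$ depend on this choice. This is harmless: the conclusion we want concerns only the marginal law of the pair, and what we use the coupling for is merely to produce \emph{some} deterministic sequence $\delta_n \to 0$ with the right asymptotics; once this sequence is fixed, the distributional convergence on the original space is coupling-free. The only genuinely nontrivial input is Lemma~\ref{lem:sko}, which is already established.
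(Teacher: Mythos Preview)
Your proof is correct and follows essentially the same approach as the paper: Skorokhod representation to pass to almost sure convergence, the lower bound via \eqref{eq:lemSko3}, and an upper bound obtained by combining \eqref{eq:lemSko1} with a diagonal extraction to produce the sequence $(\delta_n)$. The only cosmetic difference is that the paper packages the diagonal step through events $A(n,\delta)=\{\tau(F(\delta),X^m)\leq\tau(F,X)+\delta\text{ for all }m\geq n\}$ and a ``good pair'' monotonicity argument, whereas you work directly with the one-index probabilities $p_{n,k}$ and threshold indices $N_k$; your version is marginally more direct, and your closing remark on the coupling-dependence of $(\delta_n)$ is a useful clarification that the paper leaves implicit.
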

 \begin{proof}
   By the Skorokhod representation theorem we may assume without loss of generality
   that $X^n$ converges almost surely to $X$; it is then enough to construct
   $(\delta_n)_{n\geq 0}$ such that $\tau(F(\delta_n),X^n)$ converges in probability to
   $\tau(F,X)$.
   By Lemma~\ref{lem:sko}, we have almost surely, for any sequence $(\delta_n)_{n\geq 0}$, 
   $ \liminf \tau(F(\delta_n),X^n) \geq \tau(F,X)$. To prove the upper bound, we fix $\eps > 0$, and it remains to show that we can construct a sequence $(\delta_n)$ with
   \[\lim_n \prb{ \tau(F(\delta_n,X^n)) > \tau(F,X) + \epsilon} = 0.\]
   Define the events
   \[
     A(n,\delta) \eqdef \left\{ \tau(F(\delta), X^m) \leq \tau(F,X) + \delta, \forall m\geq n\right\},
   \]
   and say that $(n,\delta)$ is good if $\mathbb{P}(A(n,\delta)) \geq 1-\delta$.
   It is easily checked that goodness is doubly monotonous:
   \[ (n'\geq n, \delta' \geq \delta, (n,\delta) \text{ good })
     \implies (n',\delta') \text{ good.}
   \]
   Now, for any fixed $\delta > 0$, the events $\set{ A_{n,\delta} }_{n \geq 1}$ form an increasing sequence,
   and
   \[
     \bigcup_{n\geq 0} A_{n,\delta} = \left\{
       \limsup_{n\geq 0} \tau(F(\delta), X^n) \leq \tau(F,X) + \delta
     \right\}
   \]
   has probability one by \eqref{eq:lemSko1}. As a consequence, for each $\delta > 0$, there is a finite $n(\delta)$ such that $(n(\delta),\delta)$ is good, for instance, 
   $
   n(\delta) = \min \set{ n \geq 1 | (n,\delta) \text{ is good} }
   $;
   and using the monotony of goodness, one can then easily construct a decreasing sequence $(\delta(n))_{n\geq 0}$ that decreases
   to zero and such that $(n,\delta_n)$ is good for each $n \geq 1$.


   Finally, on $A(n,\delta_n)$, $\tau(F(\delta_n,X^n)) \leq \tau(F,X) + \delta_n$, so
   for any $\epsilon>0$, and for $n$ large enough to ensure $\delta_n\leq \epsilon$, 
   \[
     \prb{ \tau(F(\delta_n,X^n)) > \tau(F,X) + \epsilon}
     \leq \prb{ A(n,\delta_n)^c} \leq \delta_n \xrightarrow[n\to\infty]{}0,
   \]
   concluding the proof that $\tau(F(\delta_n),X^n)$ converges to $\tau(F,X)$ in probability. 
 \end{proof}

\bibliographystyle{plain}
\bibliography{biblio}

\end{document}